\newcommand\D{{\not\! d}}
\newcommand\x{\xi}
\newcommand\ee{{\mathbf e}}
\newcommand\CC{\mathbb C}
\newcommand\RR{\mathbb R}
\newcommand\NN{\mathbb N}
\newcommand\GL{{\mathrm{GL}}}
\newcommand\diag{{\mathrm{diag}}}
\newcommand\End{\operatorname{End}}
\newcommand\Mat{\operatorname{Mat}}
\theoremstyle{plain}
\newtheorem{thm}{Theorem}[section]
\newtheorem{lem}[thm]{Lemma}
\newtheorem{prop}[thm]{Proposition}
\newtheorem{cor}[thm]{Corollary}
\newtheorem{ex}[thm]{Example}
\theoremstyle{definition}
\newtheorem{defn}[thm]{Definition}
\newtheorem{remark}{Remark}
\title[Reducibility of Matrix Weights]{Reducibility of Matrix Weights}
\author{Juan Tirao}
\address{Juan Tirao.
 Universidad Nacional de C\'or\-do\-ba. FaMAF, Ciudad Universitaria,
C\'or\-do\-ba PC:5000 Argentina.
tirao@famaf.unc.edu.ar}
\author{Ignacio Zurri\'an}
\address{Ignacio Zurri\'an.
Pontificia Universidad Cat\'olica de Chile. Facultad de Matem\'aticas, Vicu\~na Mackenna 4860,
Macul, Santiago, PC:7820436 Chile.
zurrian@famaf.unc.edu.ar}
\subjclass[2010]{42C05-47L80-33C45}
\keywords{Matrix orthogonal polynomials, reducible weights, complete reducibility, the algebra of a reducible weight}
\begin{document}

\begin{abstract}
In this paper we discuss the notion of reducibility for matrix weights and introduce a real vector space  $\mathcal C_\RR$ which encodes all information about the reducibility of $W$. In particular a weight $W$ reduces if and only if there is a non-scalar matrix $T$ such that $TW=WT^*$.  Also, we prove that reducibility can be studied by looking at  the commutant of the monic orthogonal polynomials or by looking at the coefficients of the corresponding three term recursion relation. A matrix weight may not be expressible as direct sum of irreducible weights, but it is always equivalent to a direct sum of irreducible weights. We also establish that the decompositions of two equivalent weights as sums of irreducible weights have the same number of terms and that, up to a permutation, they are equivalent.

We consider the algebra of right-hand-side matrix differential operators $\mathcal D(W)$ of a reducible weight $W$, giving its general structure. Finally,  we make a change of emphasis by considering reducibility of polynomials, instead of reducibility of matrix weights.
\end{abstract}

\maketitle
\begin{section}{Introduction}
  The subject of matrix valued orthogonal polynomials was introduced by M.G. Krein more than sixty years ago, see \cite{K49,K71}. In the scalar valued context, the orthogonal polynomial satisfying second order differential equations played a very important role in many areas of mathematics and its applications.  In \cite{D97}, A.J. Dur\'an started the study of matrix weights whose Hermitian bilinear form admits a symmetric second order matrix differential operator, following similar considerations by S. Bochner in the scalar case in \cite{B29}.

Motivated by previous works on the theory of matrix valued spherical functions,
see \cite{T77}, one finds in \cite{GPT02a,GPT03,G03} the first examples of classical pairs considered in \cite{D97}. In \cite{DG04} one can find the development of a method that leads to introduce some other examples of classical pairs.
In the last years, one can see a growing number of papers devoted to
different aspects of this question. For some of these recent papers, see
\cite{T03,DG05a,
CG05,GPT05,M05,PT07a}
as well as
\cite{GIV11,I11,KvPR12,PTZ14,TZ14,CI14,vPR14,AKR15}, among others.

 When one is in front of a concrete matrix weight one should ask himself: is it a new example or is it equivalent to a direct sum of known weights of smaller dimensions?
In this work we prove that a weight $W$ is reducible to weights of smaller size if and only if there exists a non-scalar constant matrix $T$ such that $TW=WT^*$, see Theorem \ref{reducibility1}. We also study the form in which a matrix weight reduces. In particular, we show that this reduction is unique.

Given a matrix weight $W$ and a sequence of matrix orthogonal polynomials $\{Q_n\}_{n\in\mathbb N_0}$, it is natural to consider the matrix differential operators $D$ such that $Q_n$ is an eigenfunction of $D$ for every $n\in\mathbb N_0$. The set of these operators is a noncommutative $*$-algebra $\mathcal{D}(W)$ (see \cite{CG06,GT07,T11,Z15}).
In Section \ref{alg} we study the structure of the algebra $\mathcal D(W)$ for a reducible weight, see Proposition \ref{prop}.

Finally, we give some irreducibility criteria, showing that one can decide the irreducibility of a weight by also considering  the commutant of the monic orthogonal polynomials $\{P_n\}_{n\in\mathbb N_0}$ (see Corollary \ref{cor}), or by looking at the commutant of the coefficients of the three term recursion relations satisfied by  them (see Theorem \ref{ab}).

 We prove in Theorem \ref{scalar} that if there is a sequence of orthogonal polynomials with respect to an irreducible weight  $W$ and with respect to a weight $V$, both with bounded support, then we have $W=\lambda V$, with $\lambda>0$.  This is not true when the support is not bounded, even more, one can have a sequence of polynomials which are orthogonal with respect to two weights $W$ and $V$, being $W$ irreducible and $V$ reducible (see Example \ref{mom}). Thus, at the end of the paper we make a change of emphasis by considering reducibility of polynomials, instead of reducibility of matrix weights.

It is known that the classical scalar orthogonal polynomials do not satisfy linear differential equations of  odd  order. Even more, the algebras $\mathcal D(W)$ are generated by one operator of order two, see \cite{M05}. The situation in the matrix context is very different, see \cite{DdI08}. Nevertheless, deciding whether there is or there is not an irreducible weight $W$, such that the algebra $\mathcal D(W)$ contains an operator of first order, is a  problem that is still unsolved. We believe that  the results in the present paper may be useful for giving an answer  in the future.
\end{section}
 \subsection*{Acknowledgement}
This paper was partially supported by CONICET PIP 112-200801-01533 and by the programme Oberwolfach Leibniz Fellows 2015.
\begin{section}{Reducibility of Matrix Weights}
	
An inner product space will be a finite dimensional complex vector space $V$ together with a specified inner product $\langle\cdot,\cdot\rangle$.
If $T$ is a linear operator on $V$ the adjoint of $T$ will be denoted $T^*$.

By a weight $\tilde W= \tilde W(x)$ of linear operators on $V$ we mean
an integrable function on an interval $(a,b)$ of the real line, such that $ \tilde W(x)$
is a (self-adjoint) positive semidefinite operator on $V$ for all $x\in (a,b)$, which
is positive definite almost everywhere and has finite moments of all orders: for all
$n\in\NN_0$ we have
$$\int_a^b x^n\tilde W(x)\,dx\in\End(V).$$

More generally we could assume that $\tilde W$ is a Borel measure on the real line
of linear operators on $V$, such that: $\tilde W(X)\in\End(V)$ is positive semidefinite for
any Borel
set $X$, $\tilde W$ has finite moments of any order, and $\tilde W$ is nondegenerate, that
is for P in the polynomial ring $\End(V)[x]$
$$(P,P)=\int_\RR P(x)\,d\tilde W(x)P(x)^*=0$$
only when $P=0$. In any case the size of $\tilde W$ is the dimension of $V$.

If we select an orthonormal basis $\{\ee_1,\dots,\ee_N\}$ in $V$, we may represent each
linear operator $\tilde W(x)$ on $V$ by a square matrix $W(x)$ of size $N$, obtaining
a weight matrix of size $N$ on the real line. By this we mean a complex
$N\times N$-matrix valued integrable function on the interval $(a,b)$
such that $W(x)$ is a (self-adjoint) positive semidefinite complex matrix for all
$x\in (a,b)$, which
is positive definite almost everywhere and with finite moments of all orders: for all
$n\in\NN_0$ we have
$$M_n=\int_a^b x^n W(x)\,dx\in\Mat_N(\CC).$$

Of course, any result involving weights of linear operators implies an analogous result for matrix weights and viceversa. Along the paper, we shall work  either with weights of linear operators or with matrix weights, depending on what we consider better for reader's  understanding.

\begin{defn}
Two weights $\tilde W= \tilde W(x)$ and $\tilde W'= \tilde W'(x)$ of linear operators
on $V$ and $V'$, respectively, defined on the same interval are {\it equivalent},
$\tilde W\sim\tilde W'$, if there exists an isomorphism $T$ of $V$ onto $V'$ such that
$\tilde W'(x)=T\tilde W(x)T^*$ for all $x$.
Moreover, if  $T$ is unitary then they are {\it unitarily equivalent}.

Two matrix weights $W=W(x)$ and $W'=W'(x)$ of the same size and defined on the same interval
are {\it equivalent}, $W\sim W'$, if there exists a nonsingular matrix
$M$ such that $W'(x)=MW(x)M^*$ for all $x$.
Moreover, if the matrix $M$ is unitary then they are {\it unitarily equivalent}.
\end{defn}
Let $S$ be an arbitrary set of elements. By an $S$-module on a field $K$ we mean a pair $(W,V)$ formed by
a finite dimensional vector space $V$ over $K$ and a mapping $W$ which assigns to every element $x\in S$ a linear
operator $W(x)$ on $V$. It follows that an $S$-module is an additive group with two domains of
operators, one being the field $K$ and the other one being the set $S$.

In particular, if $V$ is an inner product space and $W=W(x)$ is a weight of linear operators on $V$,
then $(W,V)$ is an $S$-module, where $S$ is the support of $W$. It is important to notice that
the equivalence of weights of linear operators does not coincide with the notion of isomorphism
among $S$-modules.
If $W=W(x)$ is a weight of linear operators on $V$, we say that $W$ is the orthogonal direct sum
$W=W_1\oplus\cdots\oplus W_d$ if $V$ is the orthogonal direct sum $V=V_1\oplus\cdots\oplus V_d$
where each subspace $V_i$ is $W(x)$-invariant for all $x$ and $W_i(x)$ is the restriction of $W(x)$
to $V_i$.

\begin{defn}
We say  that a weight $\tilde W=\tilde W(x)$ of linear operators on $V$ reduces to smaller-size
weights, or simply {\it reduces},  if $\tilde W$ is equivalent to a direct sum $W'=W'_1\oplus\cdots\oplus W'_N$ of
orthogonal smaller dimensional weights.

We say  that a matrix weight $W=W(x)$ reduces to smaller-size  if its corresponding weight $\tilde W=\tilde W(x)$ of linear operators reduces.
\end{defn}

\begin{thm}\label{ediagonal} A matrix weight $W$ reduces to scalar weights if and only if there exists a positive definite matrix $P$ such that for all $x,y$ we have
\begin{equation}\label{P}
W(x)PW(y)=W(y)PW(x).
\end{equation}
\end{thm}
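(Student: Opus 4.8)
The plan is to prove both implications by conjugating $W$ into a family that is either diagonal or commuting. For the forward implication, suppose $W$ reduces to scalar weights; by definition there is a nonsingular $M$ with $MW(x)M^*=D(x)$ diagonal for every $x$. The natural candidate is $P=M^*M$, which is positive definite since $M$ is nonsingular. Writing $W(x)=M^{-1}D(x)(M^{-1})^*$ and substituting into $W(x)PW(y)$, the middle factors $(M^{-1})^*M^*\,MM^{-1}$ collapse to the identity, leaving $W(x)PW(y)=M^{-1}D(x)D(y)(M^{-1})^*$. Since diagonal matrices commute, $D(x)D(y)=D(y)D(x)$, and \eqref{P} follows at once. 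This direction is routine once the correct $P$ is identified.

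For the converse, the key idea is to symmetrize \eqref{P} using the positive definite square root. Set $R=P^{1/2}$, so that $R$ is Hermitian positive definite and $P=R^2=R^*R$. Rewriting \eqref{P} as $W(x)R^2W(y)=W(y)R^2W(x)$ and conjugating by $R$ on both sides yields
\[
\bigl(RW(x)R\bigr)\bigl(RW(y)R\bigr)=\bigl(RW(y)R\bigr)\bigl(RW(x)R\bigr).
\]
Thus the matrices $B(x):=RW(x)R$ form a family of Hermitian (indeed positive semidefinite) matrices that pairwise commute, and the problem is reduced to simultaneously diagonalizing this commuting family.

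I would then invoke simultaneous unitary diagonalization to produce a single unitary $U$ with $U^*B(x)U$ diagonal for all $x$. The hard part will be justifying that \emph{one} unitary works for the whole continuum of parameters; this is the only genuine obstacle. Since $\Mat_N(\CC)$ is finite dimensional, I can select finitely many $B(x_1),\dots,B(x_m)$ whose common orthogonal eigenspace decomposition $\CC^N=\bigoplus_j E_j$ is maximally fine. Any other $B(x)$ commutes with all $B(x_i)$, hence preserves each $E_j$; if it failed to act as a scalar on some $E_j$ it would split that eigenspace further, contradicting maximality. Therefore every $B(x)$ is scalar on each $E_j$, and an orthonormal basis adapted to the decomposition diagonalizes the entire family. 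Setting $M=U^*R$, which is nonsingular, one computes $MW(x)M^*=U^*RW(x)RU=U^*B(x)U=D(x)$ diagonal. Finally, as $M$ is nonsingular and each $W(x)$ is positive semidefinite and positive definite almost everywhere, the diagonal entries $d_i(x)$ are nonnegative, positive almost everywhere, and inherit the integrability and moment conditions through the equivalence; hence $D=d_1\oplus\cdots\oplus d_N$ is a genuine direct sum of scalar weights with $W\sim D$, which is exactly the assertion that $W$ reduces to scalar weights.
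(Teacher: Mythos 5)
Your proof is correct, and while the forward direction coincides with the paper's (up to the convention $P=M^*M$ versus the paper's $P=(MM^*)^{-1}$, which agree once the conjugating matrices are inverted), your converse takes a genuinely different route. The paper first picks a point $x_0$ with $W(x_0)$ nonsingular and normalizes, replacing $W$ by $A^{-1}WA^{-1}$ with $A=W(x_0)^{1/2}$ so that $W(x_0)=I$; setting $y=x_0$ in \eqref{P} then yields $W(x)P=PW(x)$, and the diagonalization proceeds in two layers: along the eigenspaces of $P$ (which are $W(x)$-invariant and mutually orthogonal), and then within each eigenspace, where $P$ acts as a positive scalar, \eqref{P} forces the restrictions of the $W(x)$ to commute, at which point the paper cites \cite{HK65} for simultaneous unitary diagonalization of a commuting self-adjoint family. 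You instead conjugate once by $R=P^{1/2}$, turning \eqref{P} directly into literal commutativity of the Hermitian family $B(x)=RW(x)R$, which eliminates both the normalization at $x_0$ and the two-layer eigenspace bookkeeping; moreover, where the paper cites the textbook result, you prove the continuum version yourself via the maximal-refinement argument (choose a finite subfamily whose joint orthogonal eigenspace decomposition has the maximal number of summands; any other $B(x)$ preserves each summand and must act there as a scalar, else it would refine the decomposition). Your route buys a shorter, self-contained converse that needs no distinguished point of the support and makes the role of $P^{1/2}$ as a symmetrizer transparent; the paper's route buys intermediate facts --- the identity $W(x)P=PW(x)$ after normalization and the decomposition along eigenspaces of $P$ --- that are recycled essentially verbatim in Theorem \ref{uediagonal} and Corollary \ref{equivalencia}, so its structure is chosen to serve the neighboring results. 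Your closing verification that the diagonal entries inherit positivity a.e., integrability, and finite moments is a detail the paper leaves implicit, and it is worth keeping.
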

\begin{proof} Suppose that $W(x)=M\Lambda(x)M^*$, $\Lambda=\Lambda(x)$ a diagonal weight. Then
\begin{equation*}
\begin{split}
W(x)(MM^*)^{-1}W(y)&=(M\Lambda(x)M^*)(MM^*)^{-1}(M\Lambda(y)M^*)=M\Lambda(x)\Lambda(y)M^*\\
&=M\Lambda(y)\Lambda(x)M^*=(M\Lambda(y)M^*)(MM^*)^{-1}(M\Lambda(x)M^*)\\
&=W(y)(MM^*)^{-1}W(x).
\end{split}
\end{equation*}
Hence $P=(MM^*)^{-1}$ is a positive definite matrix which satisfies \eqref{P}.

Conversely, assume \eqref{P}. First of all take $x_0$ such that $W(x_0)$ is nonsingular. Let $A$
be a symmetric positive definite matrix such that $A^2=W(x_0)$.
Take $W'(x)=A^{-1}W(x)A^{-1}$ and $P' =APA$. Then $W'$ is equivalent
to $W$, $P'$ is positive definite and
$W'(x)P'W(y)=W'(y)P'W'(x)$ for all $x,y$, and
$W'(x_0)=I$. Thus without loss of generality we may assume from the start that $W(x_0)=I$.

Now the hypothesis implies that
$W(x)P=PW(x)$ for all $x$. Let $E$ be any eigenspace of P. Then  $E$ is
$W(x)$-invariant. Hence $W(x)$ and $W(y)$ restricted to $E$ commute by \eqref{P} and are self-adjoint.
Therefore $W(x)$  restricted to $E$ are simultaneously diagonalizable for all $x$, through a
unitary operator (cf. \cite{HK65}, Corollary to Theorem 30, p. 283). Since this happens for all
eigenspaces of $P$ and they are orthogonal we have proved that $W(x)$ is unitarily
equivalent to a diagonal weight, hence $W$ reduces to scalar weights. This completes the
proof of the theorem for a matrix weight. \qed
\end{proof}

\begin{thm}\label{uediagonal} The following conditions are equivalent:

(i) A matrix weight $W$ is unitarily equivalent to a diagonal weight;

(ii) For all $x,y$ we have W(x)W(y)=W(y)W(x);

(iii) There exists a positive definite matrix $P$ such that for all $x,y$ we have $W(x)PW(y)=W(y)PW(x)$ and $W(x)P=PW(x)$.
\end{thm}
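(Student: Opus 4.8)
The plan is to establish the cycle $(i)\Rightarrow(ii)\Rightarrow(iii)\Rightarrow(i)$, with all the content concentrated in the last implication. For $(i)\Rightarrow(ii)$ I would argue directly: if $W(x)=U\Lambda(x)U^*$ with $U$ unitary and $\Lambda=\Lambda(x)$ diagonal, then $W(x)W(y)=U\Lambda(x)\Lambda(y)U^*$, and since diagonal matrices commute this equals $U\Lambda(y)\Lambda(x)U^*=W(y)W(x)$. For $(ii)\Rightarrow(iii)$ I would simply take $P=I$, which is positive definite: the relation $W(x)PW(y)=W(y)PW(x)$ then reduces to the commutativity in $(ii)$, while $W(x)P=PW(x)$ holds trivially.

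The substance lies in $(iii)\Rightarrow(i)$. Here I would exploit the second hypothesis $W(x)P=PW(x)$ to decompose the space orthogonally. Since $P$ is positive definite, hence self-adjoint, $V$ is the orthogonal direct sum of the eigenspaces $E_\lambda$ of $P$, and each $E_\lambda$ is invariant under every $W(x)$ because $W(x)$ commutes with $P$. On a fixed $E_\lambda$ the operator $P$ acts as the scalar $\lambda>0$, so restricting the first hypothesis $W(x)PW(y)=W(y)PW(x)$ to $E_\lambda$ gives $\lambda\,W(x)W(y)=\lambda\,W(y)W(x)$, that is, the restrictions of the $W(x)$ to $E_\lambda$ pairwise commute. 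Being also self-adjoint, these restrictions are simultaneously diagonalizable through a single unitary operator on $E_\lambda$, by the same tool used in Theorem \ref{ediagonal} (the corollary to Theorem 30 on p.~283 of \cite{HK65}).

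Assembling the unitary operators obtained on the mutually orthogonal eigenspaces $E_\lambda$ into one unitary operator $U$ on $V$, I would obtain a single $U$ that simultaneously diagonalizes $W(x)$ for every $x$; this exhibits $W$ as unitarily equivalent to a diagonal weight and closes the cycle.

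The point I expect to be most delicate is keeping careful track of the distinction between equivalence and \emph{unitary} equivalence. Condition \eqref{P} by itself already forces, via Theorem \ref{ediagonal}, that $W$ is equivalent to a diagonal weight, but the similarity produced there need not be unitary; indeed its proof passes through a normalization $W(x_0)=I$ implemented by a symmetric (non-unitary) factor, which would destroy unitarity. The extra relation $W(x)P=PW(x)$ in $(iii)$ is precisely what upgrades equivalence to unitary equivalence: it supplies the orthogonal, $W$-invariant decomposition of $V$ directly, so no such normalization is needed and the diagonalization can be carried out orthogonally on each summand.
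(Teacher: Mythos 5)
Your proposal is correct and follows essentially the same route as the paper: both prove $(i)\Rightarrow(ii)$ by direct computation, dispose of $(ii)\Rightarrow(iii)$ trivially with $P=I$, and establish $(iii)\Rightarrow(i)$ by decomposing $V$ into the orthogonal eigenspaces of $P$, which are $W(x)$-invariant, restricting the relation $W(x)PW(y)=W(y)PW(x)$ there to obtain commuting self-adjoint operators, and invoking the same simultaneous-diagonalization result from \cite{HK65}. Your closing remark on why the extra condition $W(x)P=PW(x)$ upgrades equivalence to unitary equivalence is a sound observation consistent with the paper's Corollary \ref{equivalencia}.
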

\begin{proof}

(i) implies (ii) If $U$ is unitary and $W(x)=U\Lambda(x)U^*$ where $\Lambda(x)$ is diagonal for all $x$,
then $W(x)W(y)=(U\Lambda(x)U^*)(U\Lambda(y)U^*)=U\Lambda(x)\Lambda(y)U^*=U\Lambda(y)\Lambda(x)U^*=
(U\Lambda(y)U^*)(U\Lambda(x)U^*)=W(y)W(x)$.

(ii) implies (iii) It is obvious.

(iii) implies (i) Let $E$ be any eigenspace of P. Then  $E$ is
$W(x)$-invariant. Hence $W(x)$ and $W(y)$ restricted to $E$ commute and are self-adjoint.
Therefore $W(x)$  restricted to $E$ are simultaneously diagonalizable for all $x$, through a
unitary operator (cf. \cite{HK65}, Corollary to Theorem 30, p. 283). Since this happens for all
eigenspaces of $P$ and they are orthogonal we have proved that $W(x)$ is unitarily
equivalent to a diagonal weight, hence $W$ reduces to scalar weights. This completes the
proof of the theorem.  \qed
\end{proof}

The following corollary can also be found in \cite[p. 463]{DG04}.
\begin{cor}\label{equivalencia} Let $W=W(x)$ be a matrix weight  such that $W(x_0)=I$ for some
$x_0$ in the support of $W$. Then $W$ reduces to scalar weights if and only if $W$ is unitarily
equivalent to a diagonal weight.
\end{cor}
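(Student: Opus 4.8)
The plan is to deduce the corollary directly from Theorems \ref{ediagonal} and \ref{uediagonal}, using the normalization $W(x_0)=I$ as the bridge between them. One direction is immediate: if $W$ is unitarily equivalent to a diagonal weight, then, since a unitary equivalence is in particular an equivalence and a diagonal weight is the orthogonal direct sum of scalar weights, $W$ reduces to scalar weights. So the content lies entirely in the converse.

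For the converse, suppose $W$ reduces to scalar weights. By Theorem \ref{ediagonal} there is a fixed positive definite matrix $P$ with $W(x)PW(y)=W(y)PW(x)$ for all $x,y$. The key observation I would exploit is that the hypothesis $W(x_0)=I$ upgrades this single relation to the pair of relations appearing in condition (iii) of Theorem \ref{uediagonal}. Indeed, setting $y=x_0$ and using $W(x_0)=I$ yields $W(x)P=PW(x)$ for every $x$, which is precisely the second relation in (iii), while the first relation is the one just supplied by Theorem \ref{ediagonal}. Hence condition (iii) holds, and the implication (iii)$\Rightarrow$(i) of Theorem \ref{uediagonal} gives that $W$ is unitarily equivalent to a diagonal weight.

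There is no genuine obstacle here; the only point requiring a moment's thought is recognizing why the normalization is needed at all. Theorem \ref{ediagonal} by itself only guarantees reduction to scalar weights through a possibly non-unitary equivalence (the matrix $M$ with $W=M\Lambda M^*$ need not be unitary). What the condition $W(x_0)=I$ secures is exactly the commutation $W(x)P=PW(x)$, which forces $P$, and hence the simultaneous diagonalizing change of basis, to be compatible with the inner product, that is, unitary. Once this is seen, the corollary is a one-line consequence of the two preceding theorems.
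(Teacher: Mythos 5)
Your proposal is correct and follows essentially the same route as the paper: both deduce the corollary by invoking Theorem \ref{ediagonal} to obtain the positive definite matrix $P$, substituting $y=x_0$ with $W(x_0)=I$ to get the commutation $W(x)P=PW(x)$, and then applying Theorem \ref{uediagonal} (condition (iii) implies (i)) to conclude unitary equivalence to a diagonal weight. Your closing remark about why the normalization is needed is a helpful gloss but adds nothing beyond the paper's argument.
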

\begin{proof} Let us consider first the case of a matrix weight. It is obvious from the definition that if $W$ is unitarily equivalent to a diagonal weight,
then $W$ reduces to scalar weights. Conversely, let us assume that $W$ reduces to scalar weights.
By Theorem \ref{ediagonal} there exists a positive definite matrix $P$ such that $W(x)PW(y)=W(y)PW(x)$
for all $x,y$. If we put $y=x_0$ we get $W(x)P=PW(x)$ for all $x$. Then Theorem \ref{uediagonal}
implies that $W$ is unitarily equivalent to a diagonal weight. \qed
\end{proof}

\begin{ex}\label{example1} Let $W(x)=\left(\begin{matrix} x^2+x&x\\x&x\end{matrix}\right)$ supported in $(0,1)$. Then
$$W(x)=\left(\begin{matrix}1&1\\0&1\end{matrix}\right)\left(\begin{matrix} x^2&0\\0&x\end{matrix}\right)\left(\begin{matrix}1&0\\1&1\end{matrix}\right).$$
Therefore $W$ reduces to scalar weights and $W(x)PW(y)=W(y)PW(x)$ for all $x,y\in(0,1)$
with
$$P=(MM^*)^{-1}=\left[\left(\begin{matrix}1&1\\0&1\end{matrix}\right)\left(\begin{matrix}1&0\\1&1
\end{matrix}\right)\right]^{-1}=\left(\begin{matrix}1&-1\\-1&2\end{matrix}\right).$$
However $W$ is not unitarily equivalent to a diagonal weight. In fact
\begin{equation*}
\begin{split}
W(x)W(y)&=\left(\begin{matrix} x^2+x&x\\x&x\end{matrix}\right)\left(\begin{matrix} y^2+y&y\\y&y\end{matrix}\right)\\
&=\left(\begin{matrix}(x^2+x)(y^2+y)+xy&(x^2+2x)y\\(y^2+2y)x&2xy
\end{matrix}\right).
\end{split}
\end{equation*}
Since the entries $(1,2)$ and $(2,1)$ are not symmetric in $x$ and $y$, $W(x)W(y)\ne W(y)W(x)$ for all $x,y$.
\end{ex}

\begin{defn} We say that a matrix weight $W$ is the direct sum of the matrix weights $W_1,W_2,\dots,W_j$, and write $W=W_1\oplus W_2\oplus\cdots\oplus W_j$, if
$$W(x)=\left(\begin{matrix}W_1(x)&{\bf0}&\cdot&\cdot&\cdot&{\bf0}\\{\bf0}&W_2(x)&\cdot&\cdot&\cdot&{\bf0}\\
\cdot&\cdot&&&&\cdot\\\cdot&\cdot&&&&\cdot\\\cdot&\cdot&&&&\cdot\\
{\bf0}&{\bf0}&\cdot&\cdot&\cdot&W_j(x)\\\end{matrix}\right).$$
\end{defn}

\begin{thm} \label{reducibility1}Let $W=W(x)$ be a matrix weight function with support $S$. Then the following conditions are equivalent:

\noindent(i) $W$ is equivalent to a direct sum of matrix weights of smaller size;

\noindent (ii) there is an idempotent matrix $Q\ne 0,I$ such that
$QW(x)=W(x)Q^*$ for all x;

\noindent (iii) the  space $\mathcal C_\RR\equiv\{T\in\Mat_N(\CC):TW(x)=W(x)T^*\; \text{for a.e.}\; x\in S\}\supsetneq\RR I$.
\end{thm}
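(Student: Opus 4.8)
The plan is to prove the cycle $(i)\Rightarrow(ii)\Rightarrow(iii)\Rightarrow(i)$, putting essentially all the work into the last implication. For $(i)\Rightarrow(ii)$ I would write $W(x)=MD(x)M^*$ with $M$ nonsingular and $D(x)=W_1(x)\oplus\cdots\oplus W_j(x)$ block diagonal with $j\ge 2$ blocks, and let $\tilde Q$ be the orthogonal projection onto the coordinates of the first block, so that $\tilde Q=\tilde Q^*=\tilde Q^2$, $\tilde Q\ne 0,I$, and $\tilde Q$ commutes with $D(x)$. Setting $Q=M\tilde Q M^{-1}$ one checks directly that $Q^2=Q$, $Q\ne0,I$, and, using $\tilde Q^*=\tilde Q$ together with $\tilde Q D(x)=D(x)\tilde Q$, that $QW(x)=M\tilde Q D(x)M^*=MD(x)\tilde Q M^*=W(x)Q^*$. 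For $(ii)\Rightarrow(iii)$ I would first record that $\Ckr_\RR$ is a real vector space containing $\RR I$ (if $T_1,T_2$ satisfy the defining identity and $a,b\in\RR$ then so does $aT_1+bT_2$, and $I$ satisfies it trivially); since the only scalar idempotents are $0$ and $I$, the hypothesized $Q$ is non-scalar, and as $Q\in\Ckr_\RR$ this already gives $\Ckr_\RR\supsetneq\RR I$.

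The main step is $(iii)\Rightarrow(i)$, and the first move is a normalization. Integrating the identity $TW(x)=W(x)T^*$ over $S$ yields $TM_0=M_0T^*$, where $M_0=\int_S W(x)\,dx$ is positive definite by nondegeneracy of $W$. Let $A$ be the positive definite square root of $M_0$ and set $W'(x)=A^{-1}W(x)A^{-1}$; then $\int_S W'(x)\,dx=I$ and $W'\sim W$, and, exactly as in the proof of Theorem \ref{ediagonal} (using $A=A^*$), the map $T\mapsto A^{-1}TA$ is a bijection of $\Ckr_\RR(W)$ onto $\Ckr_\RR(W')$ carrying $\RR I$ onto $\RR I$. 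Working with $W'$, integrating the defining identity now gives $T'=(T')^*$, so every element of $\Ckr_\RR(W')$ is Hermitian, and its defining identity becomes the honest commutation $T'W'(x)=W'(x)T'$ for a.e.\ $x$.

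With this in hand I would choose $T'\in\Ckr_\RR(W')$ with $T'\notin\RR I$; being Hermitian and non-scalar, it admits a nontrivial orthogonal eigenspace decomposition $V=V_1\oplus\cdots\oplus V_k$ with $k\ge2$. Since $T'$ commutes with $W'(x)$ for a.e.\ $x$, each eigenspace $V_i$ is $W'(x)$-invariant for a.e.\ $x$; discarding a null set, $W'$ is the orthogonal direct sum of its restrictions to the $V_i$, each of which is a genuine weight of strictly smaller size (positive semidefinite, positive definite a.e., with finite moments, and nondegenerate, all inherited from $W'$). Hence $W'$, and therefore $W$, reduces. The main obstacle here is exactly the measure-theoretic bookkeeping: the defining identity only holds almost everywhere, so one cannot evaluate at a single point to force $T$ to be Hermitian as one does when $W(x_0)=I$. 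Replacing point evaluation by integration against $M_0=\int_S W\,dx$, which is insensitive to null-set modifications and positive definite, removes this difficulty; once $\Ckr_\RR$ is known to consist of commuting Hermitian matrices, the spectral theorem finishes the argument.
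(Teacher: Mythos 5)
Your proof is correct, and on the one substantive implication, (iii)$\Rightarrow$(i), it takes a genuinely different route from the paper; your (i)$\Rightarrow$(ii) and (ii)$\Rightarrow$(iii) coincide with the paper's (the paper conjugates the projection as $Q=M^{-1}PM$ under its convention $\tilde W=MWM^*$, which matches your $Q=M\tilde QM^{-1}$ under yours). For (iii)$\Rightarrow$(i) the paper does \emph{not} normalize the zeroth moment: it takes an arbitrary non-scalar $T\in\mathcal C_\RR$, conjugates $W$ so that $T$ is in Jordan canonical form, and extracts everything from the block identities $J_iW_{ii}(x)=W_{ii}(x)J_i^*$ --- the diagonal entries together with positivity of $W$ at some point force the eigenvalues of $T$ to be real; the same computation applied to the nilpotent part $N$ (which lies in $\mathcal C_\RR$ because the semisimple and nilpotent parts $L,N$ are real polynomials in $T$, and $\mathcal C_\RR$ is closed under real polynomials) forces $N=0$; and the eigenprojections of $L$, realized as Lagrange interpolation polynomials $p_i(L)$, then split a conjugated weight into orthogonal blocks. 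Your argument replaces all of this Jordan-form analysis with the normalization $W'=M_0^{-1/2}WM_0^{-1/2}$: integrating the a.e.\ identity $T'W'(x)=W'(x)(T')^*$ against $\int_S W'(x)\,dx=I$ makes every element of $\mathcal C_\RR(W')$ Hermitian at once, and the spectral theorem finishes. This is shorter, and it is essentially the mechanism the paper itself records immediately after the theorem, in Proposition \ref{comm} and its corollary; your use of integration against the positive definite $M_0$ in place of point evaluation is exactly the right fix for the a.e.\ hypothesis, and your remark about adjusting $W'$ on a null set to get the block form everywhere is the only bookkeeping needed. What the paper's longer route buys is explicit structural information about $\mathcal C_\RR$ of the \emph{un-normalized} weight --- every $T\in\mathcal C_\RR$ is semisimple with real spectrum --- which is invoked verbatim later, in the proof of Theorem \ref{reducibilitydegree}; if your proof is to substitute for the paper's, you should note that it recovers this too, since $T=M_0^{1/2}T'M_0^{-1/2}$ with $T'$ Hermitian is diagonalizable with real eigenvalues.
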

\begin{proof}

(i) implies (ii) Suppose that $\tilde W(x)=MW(x)M^*=\tilde W_1(x)\oplus\tilde W_2(x)$ where $\tilde W_1$ is of size $k$ and $\tilde W_2(x)$ is of size $n-k$. Let $P$ be the orthogonal projection onto the subspace of $\CC^n$ generated by $\{e_1,\dots,e_k\}$. Then $P\tilde W(x)=\tilde W(x)P$. Hence $P(MW(x)M^*)=(MW(x)M^*)P$ and
$$(M^{-1}PM)W(x)=W(x)(M^*P(M^*)^{-1})=W(x)(M^{-1}PM)^*.$$
Therefore if we take $Q=M^{-1}PM$, then $Q$ is idempotent, $Q\ne 0,I$ and for all $x$ $QW(x)=W(x)Q^*$.

(ii) implies (iii) Clearly $\mathcal C_\RR$ is a real vector space such that $I\in \mathcal C_\RR$. Now the implication is obvious.

(iii) implies (i) Our first observation is that if $T\in\mathcal C_\RR$ then its eigenvalues are real. In fact, by changing $W$ by an equivalent weight function we may assume that $T$ is in Jordan canonical form. Thus $T=\bigoplus_{1\le i\le s} J_i$ is the direct sum of elementary Jordan matrices $J_i$ of size $d_i$ with characteristic value $\lambda_i$ of the form
$$J_i=\left(\begin{matrix}\lambda_i&0&\cdot&\cdot&\cdot&0&0\\1&\lambda_i&\cdot&\cdot&\cdot&0&0\\
\cdot&\cdot&&&&\cdot&\cdot\\\cdot&\cdot&&&&\cdot&\cdot\\\cdot&\cdot&&&&\cdot&\cdot\\
0&0&\cdot&\cdot&\cdot&\lambda_i&0\\0&0&\cdot&\cdot&\cdot&1&\lambda_i\\\end{matrix}\right).$$
Let us write $W(x)$ as an $s\times s$-matrix of blocks $W_{ij}(x)$ of $d_i$-rows and $d_j$-columns. Then, by hypothesis, we have $J_iW_{ii}(x)=W_{ii}(x)J_i^*$. Thus, if $1\le k\le n$ is the index corresponding to the first row of $W_{ii}(x)$, we have $\lambda_i w_{kk}(x)=w_{kk}(x)\bar\lambda_i$. But there exists $x$ such that $W(x)$ is positive definite
and so $w_{kk}(x)>0$. Therefore $\lambda_i=\bar\lambda_i$.

Another important property of the real vector space $\mathcal C_\RR$ is the following one: if $T\in\mathcal C_\RR$ and $p\in\RR[x]$ then $p(T)\in\mathcal C_\RR$.

Let $L$  and $N$ be, respectively, the semi-simple and the nilpotent parts of $T$. The minimal polynomial of $T$ is $(x-\lambda_1)^{r_1}\cdots (x-\lambda_j)^{r_j}$ where $\lambda_1,\dots,\lambda_j$ are the different characteristic values of $T$ and $r_i$ is the greatest dimension of the elementary Jordan matrices with characteristic value $\lambda_i$. A careful look at the proof of Theorem 8, Chapter 7 of \cite{HK65} reveals that $L$ and $N$ are real polynomials in $T$. Therefore $L ,N\in\mathcal C_\RR$.

The minimal polynomial of $L$ is $p=(x-\lambda_1)\cdots(x-\lambda_j)$. Let us consider the Lagrange polynomials
$$p_k=\prod_{i\ne k}\frac{(x-\lambda_i)}{(\lambda_i-\lambda_k)}.$$
Since $p_k(\lambda_i)=\delta_{ik}$, and $ L=L^* $ it follows that $P_i=p_i( L )$ is the orthogonal projection of $\CC^n$ onto the $\lambda_i$-eigenspace of $L$. Therefore $P_i\in\mathcal C_\RR$.

Let us now consider the nilpotent part $N$ of $T$: $N$ is the direct sum of the nilpotent parts $N_i$ of each elementary Jordan block $J_i$, i.e.
$$N_i=\left(\begin{matrix}0&0&\cdot&\cdot&\cdot&0&0\\1&0&\cdot&\cdot&\cdot&0&0\\
\cdot&\cdot&&&&\cdot&\cdot\\\cdot&\cdot&&&&\cdot&\cdot\\\cdot&\cdot&&&&\cdot&\cdot\\
0&0&\cdot&\cdot&\cdot&0&0\\0&0&\cdot&\cdot&\cdot&1&0\\\end{matrix}\right).$$
Since $N\in\mathcal C_\RR$ we have $N_iW_{ii}(x)=W_{ii}(x)N_i^*$. If $N_i$ were a matrix of size larger than one, and if $1\le k\le n$ is the index corresponding to the first row of $W_{ii}(x)$, we would have $w_{kk}(x)=0$ for all $x$, which is a contradiction. Therefore all elementary Jordan matrices of $T$ are one dimensional, hence $N=0$ and $T=L$.

If $\mathcal C_\RR\supsetneq\RR I$, then there exists $T\in\mathcal C_\RR$ with $j>1$ different eigenvalues. Let $M\in \GL(N,\CC)$ such that $L=MTM^{-1}$ be a diagonal matrix and let $P_i$, $1\le i\le j$  be the orthogonal projections onto the eigenspaces of $L$. Let $\tilde W(x)=MW(x)M^*$. Then $I=P_1+\cdots+P_j$, $P_rP_s=P_sP_r=\delta_{rs}P_r, 1\le r,s\le j$ and $P_r^*=P_r$, $P_r\tilde W(x)=\tilde W(x)P_r$ for all $1\le r\le j$. Therefore,
\begin{equation*}
\begin{split}
\tilde W(x)&=(P_1+\cdots+P_j)\tilde W(x)(P_1+\cdots+P_j)=\sum_{1\le r,s\le j}P_r\tilde W(x)P_s\\
&=P_1\tilde W(x)P_1+\cdots+P_j\tilde W(x)P_j=\tilde W_1(x)\oplus\cdots\oplus\tilde W_j(x),
\end{split}
\end{equation*}
completing the proof that (iii) implies (i). Hence the theorem is proved. \qed
\end{proof}

\
For future reference we introduce the following definition.
\begin{defn}\label{CR} If $W=W(x)$ is a matrix weight function with support $S$, then we define the real vector space
$$\mathcal C_\RR=\{T\in\Mat_N(\CC):TW(x)=W(x)T^*\; \text{for a.e.}\; x\in S\}.$$
\end{defn}

Any matrix weight $W$ is equivalent to a matrix weight $\tilde W$ with the order zero moment $\tilde M_0=I$. In fact, it suffices to take $\tilde W=M_0^{-1/2}WM_0^{-1/2}$.

\begin{prop}\label{comm} Let $W$ be a weight matrix with support $S$ such that $M_0=I$. Then its real vector space $\mathcal C_\RR$ is a real form of the commutant algebra
$$\mathcal C=\{T\in\Mat_N(\CC):TW(x)=W(x)T\; \text{for a.e.}\; x\in S\}.$$
\end{prop}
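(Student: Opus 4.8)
The plan is to pin down $\mathcal C_\RR$ explicitly as the real vector space of Hermitian elements of $\mathcal C$, and then invoke the standard fact that the Hermitian part of a $*$-closed algebra is a real form of it. The hypothesis $M_0=I$ is precisely what makes the first identification work.

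First I would record that $\mathcal C$ is closed under the adjoint. Since $W(x)=W(x)^*$, taking adjoints in the defining relation $TW(x)=W(x)T$ gives $W(x)T^*=T^*W(x)$, so $T^*\in\mathcal C$ whenever $T\in\mathcal C$. In other words $T\mapsto T^*$ is an antilinear involution of $\mathcal C$. Consequently every $T\in\mathcal C$ splits uniquely as $T=A+iB$ with $A=\tfrac12(T+T^*)$ and $B=\tfrac1{2i}(T-T^*)$ Hermitian and both in $\mathcal C$; writing $\mathcal C^{h}$ for the real vector space of Hermitian elements of $\mathcal C$, this says exactly that $\mathcal C=\mathcal C^{h}\oplus i\,\mathcal C^{h}$, i.e. $\mathcal C^{h}$ is a real form of $\mathcal C$ (it is the fixed-point set of the real structure $T\mapsto T^*$).

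The heart of the matter is then to prove $\mathcal C_\RR=\mathcal C^{h}$. The inclusion $\mathcal C^{h}\subseteq\mathcal C_\RR$ is immediate: if $T\in\mathcal C$ and $T=T^*$ then $TW(x)=W(x)T=W(x)T^*$, so $T\in\mathcal C_\RR$. For the reverse inclusion --- the one nonroutine step, and the place where $M_0=I$ enters --- I would integrate the defining relation $TW(x)=W(x)T^*$ over the support $S$. Since $T$ and $T^*$ are constant matrices they pull outside the integral, and $\int_S W(x)\,dx=M_0=I$, so the identity collapses to $T=T^*$. Hence $T$ is Hermitian, and then $W(x)T^*=W(x)T$ forces $TW(x)=W(x)T$, giving $T\in\mathcal C$; thus $\mathcal C_\RR\subseteq\mathcal C^{h}$ and equality holds.

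Combining the two parts yields $\mathcal C_\RR=\mathcal C^{h}$, a real form of $\mathcal C$, as claimed. I expect the only thing to watch is the measure-theoretic justification for integrating an identity that holds only for a.e.\ $x$ over $S$, but this is harmless because $W$ is integrable with finite moments. The essential idea is simply that the normalization $M_0=I$ converts the twisted commutation relation $TW=WT^*$ into genuine self-adjointness of $T$, after which the twisted and untwisted commutants coincide on Hermitian elements.
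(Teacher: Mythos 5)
Your proof is correct and takes essentially the same route as the paper's one-line argument: integrating $TW(x)=W(x)T^*$ over $S$ and using $M_0=I$ forces $T=T^*$, after which $\mathcal C_\RR$ is exactly the Hermitian part of the $*$-closed commutant $\mathcal C$, which is a real form of it. You have merely made explicit the details the paper compresses into ``the hypothesis implies right away that $T=T^*$.''
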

\begin{proof} The hypothesis implies right away that $T=T^*$, hence the proposition follows. \qed.
\end{proof}

It is worth to state the following immediate corollary of Theorem \ref{reducibility1} which resembles a Schur's lemma.

\begin{cor}\label{Schur} Let $W=W(x)$ be a matrix  weight . Then $W$ is irreducible if and only if
its  real vector space $\mathcal C_\RR=\RR I$.
\end{cor}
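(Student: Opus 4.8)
The plan is to read the corollary off directly from Theorem \ref{reducibility1} together with the definition of irreducibility, so that the argument is a logical restatement rather than a fresh computation. First I would record the one elementary fact that makes everything work: for any real scalar $c$ we have $(cI)W(x)=cW(x)=\overline c\,W(x)=W(x)(cI)^*$, so $cI\in\mathcal C_\RR$; since $\mathcal C_\RR$ is a real vector space this gives the inclusion $\RR I\subseteq\mathcal C_\RR$ for \emph{every} weight $W$. (Note this is exactly why one takes the real span: for $c\notin\RR$ the identity $(cI)W(x)=W(x)(cI)^*$ fails.) Consequently the negation of the strict inclusion appearing in condition (iii), namely $\mathcal C_\RR\not\supsetneq\RR I$, is equivalent to the equality $\mathcal C_\RR=\RR I$. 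This is the only place where any content enters, and it is immediate.

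Next I would unwind the terminology. By definition $W$ is irreducible exactly when it does not reduce, i.e. when $W$ fails to be equivalent to a direct sum of matrix weights of smaller size; this is precisely the negation of condition (i) of Theorem \ref{reducibility1}.

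Finally I would combine the two observations. Theorem \ref{reducibility1} asserts the equivalence (i)$\iff$(iii), that is, $W$ reduces if and only if $\mathcal C_\RR\supsetneq\RR I$. Negating both sides yields: $W$ is irreducible if and only if $\mathcal C_\RR\not\supsetneq\RR I$, which by the first paragraph is the same as $\mathcal C_\RR=\RR I$. This establishes the corollary.

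There is no genuine obstacle here; the only subtlety is not to forget the always-valid containment $\RR I\subseteq\mathcal C_\RR$, without which the phrase ``$\mathcal C_\RR$ does not properly contain $\RR I$'' would not collapse to the stated equality $\mathcal C_\RR=\RR I$.
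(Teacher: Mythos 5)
Your proof is correct and follows exactly the route the paper intends: the paper states this as an ``immediate corollary'' of Theorem \ref{reducibility1} with no further argument, and your negation of the equivalence (i)$\iff$(iii), together with the always-valid containment $\RR I\subseteq\mathcal C_\RR$, is precisely that intended reading.
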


The following result can also be found in a recent work announced in \cite{KR15}.
\begin{cor} A matrix weight reduces if and only if the commutant of $\tilde W=M_0^{-1/2}WM_0^{-1/2}$ contains non-scalar matrices.
\end{cor}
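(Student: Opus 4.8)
The plan is to reduce the statement to Theorem \ref{reducibility1} applied to $\tilde W$, and then to convert the real criterion $\mathcal C_\RR\supsetneq\RR I$ into a statement about the (complex) commutant via Proposition \ref{comm}. First I would note that, by the remark preceding Proposition \ref{comm}, the weight $\tilde W=M_0^{-1/2}WM_0^{-1/2}$ is equivalent to $W$. Since a weight reduces precisely when it is equivalent to a direct sum of weights of smaller size and $\sim$ is transitive, $W$ reduces if and only if $\tilde W$ reduces. This substitution is exactly what I need in order to invoke Proposition \ref{comm}, whose hypothesis $M_0=I$ holds for $\tilde W$.

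Next I would apply the equivalence (i)$\Leftrightarrow$(iii) of Theorem \ref{reducibility1} to $\tilde W$: it reduces if and only if $\mathcal C_\RR(\tilde W)\supsetneq\RR I$. By Proposition \ref{comm}, $\mathcal C_\RR(\tilde W)$ is a real form of the commutant $\mathcal C(\tilde W)$ of $\tilde W$; in particular every element of $\mathcal C_\RR(\tilde W)$ is self-adjoint, $\mathcal C_\RR(\tilde W)\subseteq\mathcal C(\tilde W)$, and $\dim_\RR\mathcal C_\RR(\tilde W)=\dim_\CC\mathcal C(\tilde W)$.

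It then remains to check that $\mathcal C_\RR(\tilde W)\supsetneq\RR I$ is equivalent to $\mathcal C(\tilde W)$ containing a non-scalar matrix. One direction is immediate from $\mathcal C_\RR(\tilde W)\subseteq\mathcal C(\tilde W)$, a self-adjoint scalar matrix being automatically a real multiple of $I$. For the converse I would argue by dimension: a non-scalar matrix in $\mathcal C(\tilde W)$ forces $\dim_\CC\mathcal C(\tilde W)\ge 2$, hence $\dim_\RR\mathcal C_\RR(\tilde W)\ge 2$; since a self-adjoint element of $\CC I$ already lies in $\RR I$, the space $\mathcal C_\RR(\tilde W)$ must then contain a genuinely non-scalar matrix, so $\mathcal C_\RR(\tilde W)\supsetneq\RR I$. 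I do not expect a substantial obstacle here; the only point requiring care is the self-adjointness bookkeeping, which ensures that ``non-scalar over $\CC$'' and ``lying outside $\RR I$'' match up correctly across the real form and prevents a spurious complex multiple of $I$ from being miscounted as a non-scalar element of $\mathcal C_\RR(\tilde W)$.
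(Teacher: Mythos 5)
Your proposal is correct and follows exactly the route the paper intends for this corollary, which it leaves unproved as an immediate consequence of Theorem \ref{reducibility1} (the equivalence (i)$\Leftrightarrow$(iii)) combined with Proposition \ref{comm} applied to $\tilde W=M_0^{-1/2}WM_0^{-1/2}$. Your dimension count translating $\mathcal C_\RR(\tilde W)\supsetneq\RR I$ into the existence of a non-scalar element of the commutant is precisely the content of the ``real form'' statement $\mathcal C=\mathcal C_\RR\oplus i\,\mathcal C_\RR$, so there is no gap.
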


Let $(W,V)$ be an abstract $S$-module. Then $(W,V)$ is said to be simple if it is of positive
dimension and if the only invariant subspaces of $V$ are $\{0\}$ and $V$. An $S$-module is called semi-simple if it can be represented as a sum of simple submodules.

Instead, when $W=W(x)$ is a matrix weight (resp. a weight of linear operators on V) one says that $W$ is irreducible when it is not equivalent to a direct sum of matrix (resp. an orthogonal direct sum of operator) weights of smaller size. Furthermore an operator weight will be called simple if has no proper invariant subspace.

A semi-simple abstract $S$-module $(W,V)$ can be represented as the direct sum
$V=V_1\oplus\cdots\oplus V_j$ of a finite collection $\Phi=\{V_i\}$ of simple $S$-submodules. Moreover, if we have
a representation of this kind, and if $V'$ is any invariant subspace of $V$, then there exists a
subcollection $\Phi_0$ of $\Phi$ such that $V$ is the direct sum of $V'$ and of the sum of the submodules
belonging to $\Phi_0$.

Conversely, let $V$ be an $S$-module which has the following property: if $V'$ is any invariant subspace of $V$,
there exists an invariant subspace $V''$ such that $V=V'\oplus V''$. Then $V$ is semi-simple (cf. Propositions 1,\,2 in Ch. VI, $\S$I of \cite{C99}).

\begin{prop} Let $W=W(x)$ be a matrix weight with support $S$. Then the $S$-module
$(W,V)$ is semi-simple.
\end{prop}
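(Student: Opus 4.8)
The plan is to invoke the characterization of semi-simplicity recalled immediately before the statement: to conclude that $(W,V)$ is semi-simple it suffices to show that \emph{every} invariant subspace $V'$ of the $S$-module $(W,V)$ admits an invariant complement. The single ingredient I would exploit is that, by the very definition of a weight of linear operators, each value $W(x)$ is a self-adjoint operator on the inner product space $V$.

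First I would fix an arbitrary invariant subspace $V'$, so that $W(x)V'\subseteq V'$ for all $x\in S$, and set $V''=(V')^\perp$, the orthogonal complement taken with respect to the given inner product of $V$; this already gives the vector-space decomposition $V=V'\oplus V''$. The main (and essentially only) step is then to check that $V''$ is again invariant. For $w\in V''$ and $v\in V'$ we have
\begin{equation*}
\langle W(x)w,v\rangle=\langle w,W(x)v\rangle=0,
\end{equation*}
since $W(x)$ is self-adjoint and $W(x)v\in V'$ while $w\perp V'$. Hence $W(x)w\in(V')^\perp=V''$, so $W(x)V''\subseteq V''$ for every $x\in S$, and $V''$ is an invariant complement of $V'$.

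Therefore every invariant subspace of $(W,V)$ has an invariant (in fact orthogonal) complement, and the criterion quoted from \cite{C99} before the statement yields that the $S$-module $(W,V)$ is semi-simple. I do not expect any genuine obstacle here: the whole argument rests on the self-adjointness of the weight values $W(x)$, which forces orthogonal complements of invariant subspaces to be invariant, and the verification that this holds simultaneously for all $x\in S$ is immediate. The only point worth keeping in mind is that the relevant invariance is with respect to the operators $W(x)$ for $x$ in the support $S$, and that self-adjointness holds for each such $x$, so the orthogonal complement is invariant for the full family at once.
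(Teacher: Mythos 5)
Your proof is correct and follows exactly the paper's argument: take the orthogonal complement of an invariant subspace, use self-adjointness of each $W(x)$ to show it is invariant, and apply the complementation criterion from \cite{C99}. The paper states this in one line; you have merely spelled out the same computation.
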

\begin{proof} Let $V'$ be any invariant subspace of $V$ and let $V''$ be its orthogonal complement. Then, since $W(x)$ is self-adjoint for all $x$, $V''$ is invariant. Therefore the $S$-module
$(W,V)$ is semi-simple. \qed
\end{proof}

Let $V=V_1\oplus\cdots\oplus V_j=V'_1\oplus\cdots\oplus V'_{j'}$ be two representations of an abstract semi-simple
$S$-module $V$ as a direct sum of simple submodules. Then we have $j=j'$ and there exists a permutation $\sigma$
of the set $\{1,\dots, j\}$ such that $V_i$ is isomorphic to $V'_{\sigma(i)}$ for all $1\le i\le j$ (cf. Propositions 3,
in Ch. VI, $\S$I of \cite{C99}). Clearly, this uniqueness result can be generalized to the following one: if
$V=V_1\oplus\cdots\oplus V_j$ and $V'=V'_1\oplus\cdots\oplus V'_{j'}$ are two isomorphic $S$-modules represented
as direct sums of simple submodules, then we have $j=j'$ and there exists a permutation $\sigma$
of the set $\{1,\dots, j\}$ such that $V_i$ is isomorphic to $V'_{\sigma(i)}$ for all $1\le i\le j$.

This last statement is not true for matrix weights or operator weights on an inner product space $V$, because, as we
pointed out at the beginning, the equivalence among weights has a different meaning than the isomorphism of $S$-modules.
In fact, Example \ref{example1}  illustrates   this: the matrix weight
$$W(x)=\left(\begin{matrix} x^2+x&x\\x&x\end{matrix}\right)$$
has no invariant subspace of $\CC^n$, i.e. it is simple, but it is equivalent to
$$W'(x)=\left(\begin{matrix} x^2&0\\0&x\end{matrix}\right)$$
 which is the direct sum of two scalar weights.
Nevertheless the following holds.

\begin{thm}\label{equivalencia2} Let $W=W_1\oplus\cdots\oplus W_j$ and $W'=W'_1\oplus\cdots\oplus W'_{j'}$ be representations of the operator weights $W$ and $W'$ as orthogonal direct sums of simple (irreducible) weights. If $W$ and $W'$ are unitarily equivalent, then we have $j=j'$ and there exists a permutation $\sigma$
of the set $\{1,\dots, j\}$ such that $W_i$ is unitarily equivalent to $W'_{\sigma(i)}$ for all $1\le i\le j$.
\end{thm}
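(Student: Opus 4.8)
The plan is to reduce the assertion about unitary equivalence to the uniqueness theorem for decompositions of abstract $S$-modules recalled just before the statement, and then to upgrade the $S$-module isomorphisms that theorem produces into genuine unitary equivalences by means of Schur's lemma. The point is that unitary equivalence is strictly finer than $S$-module isomorphism, so the delicate part is precisely this upgrade; once it is isolated it follows from simplicity alone.

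First I would record that a unitary equivalence is in particular an $S$-module isomorphism. Indeed, if $U$ is unitary and $W'(x)=UW(x)U^*$ for all $x$, then $U^*=U^{-1}$ gives $UW(x)=W'(x)U$, so $U$ is an isomorphism of $V$ onto $V'$ intertwining the two $S$-actions. Moreover, since each $W_i$ (resp.\ $W'_k$) is simple, the subspace $V_i$ (resp.\ $V'_k$) is a simple $S$-submodule, so the two given orthogonal decompositions are, in particular, decompositions of the isomorphic $S$-modules $(W,V)\cong(W',V')$ into simple submodules. Applying the uniqueness statement recorded above (Proposition 3 in Ch.\ VI, $\S$I of \cite{C99}) I obtain $j=j'$ together with a permutation $\sigma$ of $\{1,\dots,j\}$ such that $V_i$ is $S$-module isomorphic to $V'_{\sigma(i)}$ for every $i$; concretely, for each $i$ there is an invertible $T_i:V_i\to V'_{\sigma(i)}$ with $T_iW_i(x)=W'_{\sigma(i)}(x)T_i$ for all $x$.

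The heart of the argument is to promote each such $T_i$ to a unitary. Taking adjoints in $T_iW_i(x)=W'_{\sigma(i)}(x)T_i$ and using that $W_i(x)$ and $W'_{\sigma(i)}(x)$ are self-adjoint yields $W_i(x)T_i^*=T_i^*W'_{\sigma(i)}(x)$; combining the two relations shows that $T_i^*T_i$ commutes with every $W_i(x)$, i.e.\ $T_i^*T_i$ is an $S$-module endomorphism of the simple module $(W_i,V_i)$. By Schur's lemma over $\CC$ the endomorphism algebra of a simple module is $\CC I$, so $T_i^*T_i=\lambda_i I$, and positive-definiteness forces $\lambda_i>0$. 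Hence $U_i:=\lambda_i^{-1/2}T_i$ satisfies $U_i^*U_i=I$; being a square operator it is then unitary, so $U_iU_i^*=I$ as well, that is $T_iT_i^*=\lambda_i I$. Therefore $U_iW_i(x)U_i^*=\lambda_i^{-1}T_iW_i(x)T_i^*=\lambda_i^{-1}W'_{\sigma(i)}(x)\,T_iT_i^*=W'_{\sigma(i)}(x)$, so $W_i$ is unitarily equivalent to $W'_{\sigma(i)}$, which is exactly what is required.

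The only genuinely delicate point is this final upgrade, and it rests entirely on Schur's lemma: simplicity of $(W_i,V_i)$ forces the positive operator $T_i^*T_i$ to be a scalar, and that is precisely what permits replacing the abstract intertwiner $T_i$ by the unitary $U_i$ without disturbing the weights. Everything else is bookkeeping that reduces the problem to the $S$-module uniqueness result already stated, so I anticipate no obstacle beyond verifying the two intertwining identities and the scalarity of $T_i^*T_i$.
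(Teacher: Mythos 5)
Your proof is correct, but it follows a genuinely different route from the paper's. The paper first reduces to the case $W=W'$ and then runs a constructive induction: at step $k$ it forms $E=\bigoplus_{i<k}V'_{\sigma(i)}\oplus\bigoplus_{i>k}V_i$, takes the orthogonal complement $E^\perp$ (invariant by self-adjointness of the weight values, and a sum of certain $V'_i$ by Proposition 1 in Ch.\ VI, $\S$I of \cite{C99}), observes that $E^\perp$ is unitarily isomorphic to $V/E\cong V_k$, hence simple and equal to a single $V'_{i_0}$, and sets $\sigma(k)=i_0$; the unitary equivalences are thus carried along inside the exchange argument itself, with no appeal to Schur's lemma. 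You instead quote the abstract uniqueness theorem for semi-simple $S$-modules (Proposition 3 of the same reference, which the paper states but does not use in this proof) to get $j=j'$ and invertible intertwiners $T_i$, and then perform the upgrade to unitaries: self-adjointness of $W_i(x)$ and $W'_{\sigma(i)}(x)$ gives $W_i(x)T_i^*=T_i^*W'_{\sigma(i)}(x)$, so $T_i^*T_i$ commutes with every $W_i(x)$, and simplicity of $(W_i,V_i)$ forces $T_i^*T_i=\lambda_i I$ with $\lambda_i>0$, whence $\lambda_i^{-1/2}T_i$ is unitary and conjugates the weights correctly (note your Schur step is legitimate here even though $S$ is only a set: an eigenspace of $T_i^*T_i$ is invariant and nonzero, hence all of $V_i$). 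Your approach is more modular and isolates precisely the gap the paper itself flags --- that $S$-module isomorphism is coarser than equivalence of weights --- and it yields the reusable byproduct that any invertible intertwiner between simple weights with self-adjoint values is a positive scalar multiple of a unitary; the paper's induction, by contrast, is self-contained modulo the complement property and produces the permutation explicitly without invoking the uniqueness theorem or Schur's lemma.
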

\begin{proof} It is clear that it is enough to consider the case $W=W'$.
We shall construct the permutation $\sigma$. Suppose that $\sigma(i)$ is already defined for $i<k$ (where $1\le k\le j$)
and has the following properties: a) $\sigma(i)\ne\sigma(i')$ for $i<i'<k$; b) $V'_i$ is unitarily equivalent to
$V'_{\sigma(i)}$ for $i<k$; c) we have the orthogonal direct sum
$$V=\bigoplus_{i<k}V'_{\sigma(i)}\oplus\; \bigoplus_{i\ge k}V_i.$$
We consider the invariant subspace
$$E=\bigoplus_{i<k}V'_{\sigma(i)}\oplus\; \bigoplus_{i> k}V_i.$$
Let $E^\perp$ be the orthogonal complement of $E$ which is, by Propositions 1 in Ch. VI, $\S$I of \cite{C99} mentioned above,
the direct sum of a certain number of the spaces $V'_i$. On the other hand $E^\perp$ is unitarily isomorphic to $V/E$, i.e to $V_k$.
It follows that $E^\perp$ is simple and therefore $E^\perp$ is one of the $V'_{i}$, say $E^\perp=V'_{i_0}$.
Since $V'_{\sigma(i)}\subset E$ for $i<k$, we have $i_0\ne\sigma(i)$ for $i<k$. We define $\sigma(k)=i_0$. It is clear that the function $\sigma(i)$ now defined for $i<k+1$, satisfies  conditions a), b), c) above, with $k$ replaced by $k+1$.

Because we can define the injective function $\sigma$ on the set $\{1,\dots,j\}$ we must have $j'\ge j$. Since the two decompositions play symmetric  roles, we also have $j\ge j'$, hence $j=j'$. The theorem is proved. \qed
\end{proof}

If we come back to our Example \ref{example1} we realize that a matrix weight may not be expressible  as  direct sum of irreducible matrix weights. But such a weight is equivalent to one that is the direct sum of  irreducible weights. Taking into account this fact and Theorem \ref{reducibility1} we make the following definition.

\begin{defn} We say that a matrix weight (resp. an operator weight) is completely reducible if it is equivalent to a direct sum of irreducible matrix weights (resp. an orthogonal direct sum of irreducible operators). \end{defn}

Observe that Theorem \ref{reducibility1} implies that every  weight is completely reducible.

\

\begin{thm}\label{unicidad}
Let $W=W_1\oplus\cdots\oplus W_j$ and $W'=W'_1\oplus\cdots\oplus W'_{j'}$ be representations of the operator weights $W$ and $W'$ as orthogonal direct sums of irreducible weights. If $W$ and $W'$ are equivalent, then we have $j=j'$ and there exists a permutation $\sigma$ of the set $\{1,\dots, j\}$ such that $W_i$ is equivalent to $W'_{\sigma(i)}$ for all $1\le i\le j$.
\end{thm}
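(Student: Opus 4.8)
The plan is to reduce the general equivalence statement to the unitarily equivalent case already settled in Theorem \ref{equivalencia2}. The bridge is the elementary observation that for weights normalized so that their zero-th moment equals $I$, equivalence is automatically \emph{unitary} equivalence: if $\hat W'=T\hat W T^*$ with $M_0(\hat W)=M_0(\hat W')=I$, then $I=M_0(\hat W')=T M_0(\hat W) T^*=TT^*$, so $T$ is unitary. Thus the first step is to normalize each weight, replacing $W$ by $\hat W=M_0^{-1/2}WM_0^{-1/2}$ and likewise $W'$ by $\hat W'$. These are equivalent to $W$ and $W'$ respectively and have $M_0=I$; since $W\sim W'$ we get $\hat W\sim\hat W'$, and hence $\hat W$ and $\hat W'$ are unitarily equivalent.

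Second, I must check that normalization is compatible with the given orthogonal decompositions. Because $W=W_1\oplus\cdots\oplus W_j$ is an orthogonal direct sum, the moment $M_0=\int W$ is block diagonal, $M_0=M_0(W_1)\oplus\cdots\oplus M_0(W_j)$, and hence so are its positive square root and the inverse thereof. Therefore $\hat W=\hat W_1\oplus\cdots\oplus\hat W_j$ remains an orthogonal direct sum, where $\hat W_i=M_0(W_i)^{-1/2}W_iM_0(W_i)^{-1/2}$ is the normalization of $W_i$; in particular each $\hat W_i$ has $M_0=I$ and is equivalent to $W_i$. Since irreducibility is an invariant of the equivalence class, each $\hat W_i$ is irreducible, and I then observe that an irreducible weight is necessarily simple: a proper invariant subspace would have an invariant orthogonal complement (the operators $\hat W_i(x)$ are self-adjoint), producing a genuine orthogonal reduction and contradicting irreducibility. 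Consequently $\hat W=\bigoplus_i\hat W_i$ and $\hat W'=\bigoplus_l\hat W'_l$ are orthogonal direct sums of simple weights.

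Third, I apply Theorem \ref{equivalencia2} to the unitarily equivalent weights $\hat W$ and $\hat W'$, now exhibited as orthogonal direct sums of simple weights. This yields $j=j'$ together with a permutation $\sigma$ of $\{1,\dots,j\}$ such that $\hat W_i$ is unitarily equivalent to $\hat W'_{\sigma(i)}$ for every $i$. Finally, transitivity of equivalence closes the argument: $W_i\sim\hat W_i\sim\hat W'_{\sigma(i)}\sim W'_{\sigma(i)}$, so $W_i$ is equivalent to $W'_{\sigma(i)}$, as required.

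The main obstacle is conceptual rather than computational: one must upgrade mere equivalence to unitary equivalence before Theorem \ref{equivalencia2} can be invoked, and this is precisely what normalization to $M_0=I$ accomplishes. The delicate point to respect is that \emph{simple} and \emph{irreducible} are genuinely different notions for weights --- Example \ref{example1} is simple yet reducible --- so the implication ``irreducible $\Rightarrow$ simple'' must be applied to the normalized pieces $\hat W_i$ (which are irreducible because equivalent to the irreducible $W_i$), and not confused with its converse, which fails without the normalization $M_0=I$.
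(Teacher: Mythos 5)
Your proof is correct, and it takes a genuinely different route from the paper's. The paper proves Theorem \ref{unicidad} by factoring the equivalence matrix: writing $M=UP$ (polar decomposition) and $P=VDV^*$ with $D$ positive diagonal, it reduces, modulo unitary equivalences, to the case $W'=DWD$, splits $D=D_1\oplus\cdots\oplus D_j$ along the block structure of $W$, obtains $W'=(D_1W_1D_1)\oplus\cdots\oplus(D_jW_jD_j)$ as an orthogonal sum of irreducible weights, and then applies Theorem \ref{equivalencia2}. You instead normalize both weights to have zero-th moment $I$, observe that equivalence between weights so normalized is automatically unitary (since $I=TM_0T^*=TT^*$), and check that normalization respects the given orthogonal decompositions because $M_0$, hence $M_0^{-1/2}$, is block diagonal; then Theorem \ref{equivalencia2} applies directly. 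Your route is arguably tidier on one delicate point: in the paper's argument the unitary $V$ used to diagonalize $P$ must be reconciled with the block-diagonal structure of $W$, whereas your conjugating matrix $M_0^{-1/2}$ is block diagonal by construction, so no such bookkeeping arises. It is also consonant with the paper's own toolkit --- the same normalization trick underlies Proposition \ref{comm}. What the paper's method buys in exchange is an explicit normal form for the equivalence ($W'\sim DWD$ with $D$ positive diagonal), exhibiting each $W'_{\sigma(i)}$ as unitarily equivalent to $D_iW_iD_i$. Finally, you make explicit a step the paper leaves tacit: Theorem \ref{equivalencia2} needs \emph{simple} summands, and you correctly supply the implication ``irreducible $\Rightarrow$ simple'' (via invariance of the orthogonal complement under the self-adjoint operators $W(x)$), while rightly noting that the converse fails, as Example \ref{example1} shows.
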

\begin{proof}
Modulo unitary equivalence we may assume from the beginning that $W$ and $W'$ are
matrix weights of the same size. Let $W'(x)=MW(x)M^*$ for all $x$, with $M$ a
nonsingular matrix. We may write $M=UP$ in a unique way with $U$ unitary and $P$
positive definite, and $P=VDV^*$ where $V$ is unitary and $D$ is a positive diagonal
matrix. Then $W'(x)=(UV)D(V^*W(x)V)D(UV)^*$. Modulo unitary equivalences we may
assume that $W'(x)=DW(x)D$. If we write $D=D_1\oplus\cdots\oplus D_j$ where $D_i$
is a diagonal matrix  block of the same size as the matrix block $W_i$. Then
$W'=(D_1W_1D_1)\oplus\cdots\oplus(D_jW_jD_j)$.
From the hypothesis we also have the representation of $W'=W'_1\oplus\cdots\oplus W'_{j'}$
as an orthogonal direct sum of irreducible weights.
Now we are ready to apply Theorem \ref{equivalencia2} to conclude that $j=j'$
and that there exists a permutation $\sigma$ such that $D_iW_iD_i\approx W'_{\sigma(i)}$ for all $1\le i\le j$. Hence $W_i\sim W'_{\sigma(i)}$ for all $1\le i\le j$, completing the proof of the theorem. \qed
\end{proof}

\begin{thm}\label{reducibilitydegree}
Let $W=W(x)$ be an operator weight equivalent to an orthogonal direct sum $W_1\oplus\cdots\oplus W_d$ of
irreducible weights.  If $j(T)$ is the number of distinct eigenvalues of $T\in\mathcal C_\mathbb{R}$,
then $d=\max\{j(T):T\in\mathcal C_\mathbb{R}\}$. Moreover, $W$ is equivalent to a matrix weight
$W'=W'_1\oplus\cdots\oplus W'_d$ where $W'_i$ is the restriction of $W'$ to one of the
eigenspaces of a diagonal matrix $D\in\mathcal C_\mathbb{R}$. Besides  $j(T)$ is the degree of the minimal polynomial of $T$.
\end{thm}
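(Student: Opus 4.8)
The plan is to deduce all three assertions from the structural facts already extracted in the proof of Theorem~\ref{reducibility1} together with the uniqueness statement of Theorem~\ref{unicidad}. I would begin with the last (``Besides'') clause, which is essentially free. That proof shows that every $T\in\mathcal C_\RR$ has real eigenvalues and vanishing nilpotent part, hence is semisimple (diagonalizable). For a diagonalizable matrix the minimal polynomial is $\prod_\lambda (x-\lambda)$, the product taken over the \emph{distinct} eigenvalues, so its degree equals $j(T)$. This not only settles the final sentence but also records the key feature I will use repeatedly: diagonalizing such a $T$ splits an equivalent copy of $W$ into exactly $j(T)$ orthogonal blocks, one per eigenspace, as in the ``(iii) implies (i)'' argument.

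Next I would record that $\mathcal C_\RR$ is covariant under equivalence. If $\tilde W(x)=MW(x)M^*$, a direct check gives $S\tilde W=\tilde W S^*$ for $S=MTM^{-1}$ whenever $TW=WT^*$, so $T\mapsto MTM^{-1}$ is a real-linear isomorphism of the space $\mathcal C_\RR$ of $W$ onto that of $\tilde W$. Since similarity preserves the spectrum, it preserves $j(\cdot)$, and therefore $\max\{j(T):T\in\mathcal C_\RR\}$ is an equivalence invariant. I may thus assume from the outset that $W=W_1\oplus\cdots\oplus W_d$ is already an orthogonal direct sum of irreducibles. For the lower bound $\max j\ge d$, choose distinct reals $\lambda_1,\dots,\lambda_d$ and set $D=\lambda_1 I\oplus\cdots\oplus\lambda_d I$ with the blocks matching those of $W$. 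Then $D=D^*$ commutes blockwise with $W$, so $DW=WD^*$, i.e. $D\in\mathcal C_\RR$, while $j(D)=d$.

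The reverse inequality is the heart of the matter and the step I expect to be the main obstacle. Given any $T\in\mathcal C_\RR$ with $j=j(T)$ distinct eigenvalues, the construction in the proof of Theorem~\ref{reducibility1} supplies an $M$ diagonalizing $T$ for which $\tilde W=MWM^*$ splits as an \emph{orthogonal} direct sum $\tilde W_1\oplus\cdots\oplus\tilde W_j$ over the eigenspaces of the real diagonal $MTM^{-1}$ (orthogonality comes from the self-adjoint projections $P_i$ there). Each $\tilde W_i$ is itself a weight, hence completely reducible, so it is equivalent to an orthogonal direct sum of $m_i\ge 1$ irreducibles; assembling these block-diagonally exhibits $W$ as equivalent to an orthogonal direct sum of $\sum_{i=1}^{j}m_i$ irreducible weights. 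Comparing this with the given orthogonal decomposition into $d$ irreducibles, Theorem~\ref{unicidad} forces $\sum_i m_i=d$, whence $j\le\sum_i m_i=d$. The delicate points are entirely bookkeeping: one must verify that the successive equivalences can be realized as genuine orthogonal direct-sum decompositions so that Theorem~\ref{unicidad} is applicable, and that each $m_i\ge 1$.

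Finally, for the ``Moreover'' clause I would take a $T\in\mathcal C_\RR$ attaining $j(T)=d$ (the diagonal $D$ of the lower-bound step already does) and diagonalize it, producing a weight $W'=MWM^*\sim W$, a diagonal matrix $D'=MTM^{-1}\in\mathcal C_\RR$ of $W'$, and the orthogonal decomposition $W'=W'_1\oplus\cdots\oplus W'_d$ over the $d$ eigenspaces of $D'$. Each $W'_i$ is completely reducible into at least one irreducible, but the total number of irreducible summands is again $d$ by Theorem~\ref{unicidad}; since there are already $d$ blocks, every $W'_i$ must be irreducible. This realizes $W$ as equivalent to $W'=W'_1\oplus\cdots\oplus W'_d$ with each $W'_i$ the restriction of $W'$ to an eigenspace of the diagonal matrix $D'\in\mathcal C_\RR$, completing the plan.
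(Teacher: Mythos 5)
Your proposal is correct and follows essentially the same route as the paper: the lower bound via $T=\lambda_1P_1+\cdots+\lambda_dP_d\in\mathcal C_\RR$, the upper bound by diagonalizing $T$ (its semisimplicity having been established in the proof of Theorem \ref{reducibility1}), splitting the equivalent weight over the eigenspaces, refining each block into irreducibles, and invoking Theorem \ref{unicidad} to force the total count to equal $d$, which also yields the irreducibility of the blocks in the ``Moreover'' clause. Your explicit verifications of the covariance $T\mapsto MTM^{-1}$ of $\mathcal C_\RR$ under equivalence and of the ``Besides'' clause simply spell out what the paper uses implicitly.
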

\begin{proof}
Suppose that $W=W_1\oplus\cdots\oplus W_d$ and let $P_1,\dots,P_d$ be the corresponding
orthogonal projections. Define $T=\lambda_1 P_1+\cdots+\lambda_d P_d$ with
$\lambda_1,\dots,\lambda_d$ all different. Then clearly $T\in\mathcal C_\mathbb{R}$. Hence
$d\le\max\{j(T):T\in\mathcal C_\mathbb{R}\}$.

Conversely, let $T\in\mathcal C_\mathbb{R}$ such that $j(T)=\max\{j(T):T\in\mathcal C_\mathbb{R}\}$.
Modulo unitary equivalence we may assume that $W$ is a matrix weight. In the proof
of Theorem \ref{reducibility1} we established that $T$ is semi-simple. Thus we may write
$T=A^{-1}DA$ with $D$ a diagonal matrix. Let $W'(x)=AW(x)A^*$. Then $W'\sim W$ and
$D\in\mathcal C_\mathbb{R}(W')$. We may assume that $D=D_1\oplus\cdots\oplus D_{j(T)}$ where $D_i$ is the
Jordan diagonal block corresponding to the eigenvalue $\lambda_i$ of $D$. Then
$W'=W'_1\oplus\cdots\oplus W'_{j(T)}$ where the size of the block $W'_i$ is equal to the
size of the block $D_i$, for all $1\le i\le j(T)$, because $DW'(x)=W'(x)D$. If some $W'_i$ were not irreducible we could replace it, modulo equivalence,  by a direct sum of matrix irreducible weights. Thus there exists  a matrix weight $W''\sim W'$ such that $W''=W''_1\oplus\cdots\oplus W''_j$ is a direct sum of matrix irreducible weights with $j(T)\le j$.

By hypothesis $W=W_1\oplus\cdots\oplus W_d$. Hence Theorem \ref{unicidad} implies that
$d=j$. Therefore $d=j(T)$ and the $W'_i$ are in fact irreducible. Moreover,  there exists a permutation
$\sigma$ of the set $\{1,\dots,d\}$ such that $W'_i=W_{\sigma(i)}$. The theorem is proved. \qed
\end{proof}

\

\end{section}

\begin{section}{The algebra $\mathcal D(W)$ of a reducible weight}\label{alg}

We come now to the notion of a differential operator with matrix
coefficients acting on matrix valued polynomials, i.e. elements of
$\Mat_N(\CC)[x]$. These operators could be made to act on our functions either
on the left or on the right. One finds a discussion of these two
actions in \cite{D97}. The conclusion is that if one wants to
have matrix weights that do not reduce to scalar weights and
that have matrix polynomials as their eigenfunctions, one should
settle for right-hand-side differential operators. We agree now to
say that $D$ given by
$$D=\sum_{i=0}^s \partial^iF_i(x),\quad\quad \partial=\frac{d}{dx},$$
acts on $Q(x)$ by means of
$$ QD = \sum_{i=0}^s \partial^i(Q)(x)F_i(x)$$

Given a sequence of matrix orthogonal polynomials $\{Q_n\}_{n\in\mathbb{N}_0}$ with respect to a weight matrix $W=W(x)$, we introduced in \cite{GT07}  the algebra $D(W)$ of all right-hand side
differential operators with matrix valued coefficients that have the
polynomials $Q_w$ as their eigenfunctions.  Thus
\begin{equation}\label{D}
\mathcal D(W)=\{D:Q_nD=\Gamma_n(D)Q_n,\; \Gamma_n(D)\in\Mat_N(\CC)\;\text{for
all}\;n\in\mathbb{N}_0\}.
\end{equation}

\bigskip

The definition of $\mathcal D(W)$ depends only on the weight matrix $W$ and not on the sequence
$\{Q_n\}_{n\in\mathbb{N}_0}$.

\begin{remark}

If $W'\sim W$, say $W'=MWM^*$, then the map $D\mapsto MDM^{-1}$ establishes an isomorphism  between the algebras $\mathcal D(W)$ and $\mathcal D(W')$. In fact, if $\{Q_n\}_{n\in\mathbb{N}_0}$ is a sequence of matrix orthogonal polynomials with respect to $W$, then $\{Q'_n=MQ_nM^{-1}\}_{n\in\mathbb{N}_0}$ is a sequence of matrix orthogonal polynomials with respect to $W'$. Moreover, if $Q_nD=\Gamma_n(D)Q_n$, then
$$Q'_n(MDM^{-1})= (M\Gamma_n(D)M^{-1})Q'_n.$$
Hence $MDM^{-1}\in\mathcal D(W')$ and $\Gamma_n(D')=M\Gamma_n(D)M^{-1}$.
\end{remark}

It is worth observing that each algebra $\mathcal D(W)$ is a subalgebra of the Weyl algebra $\mathbf D$ over $\Mat_N(\CC)$ of all linear right-hand side ordinary differential operators with coefficients in $\Mat_N(\CC)[x]$:
$$\mathbf D=\Big\{D=\sum_i\partial^i F_i: F_i\in\Mat_N(\CC)[x]\Big\}.$$

It is also interesting to introduce the subalgebra $\mathcal D$ of the Weyl algebra $\mathbf D$ defined by
$$\mathcal D=\Big\{D=\sum_i\partial^i F_i\in \mathbf D_N: \deg F_i\le i\Big\}.$$

A differential operator $D$ is symmetric if $(PD,Q)=(P,QD)$ for all $P,Q\in\Mat_N(\CC)[x]$. The set $\mathcal S(W)$ of all symmetric differential operators is a real form of $\mathcal D(W)$, i.e.,
\begin{equation}\label{real}
 \mathcal D(W)=\mathcal S(W)\oplus i\mathcal S(W),
\end{equation}
see Corollary 4.5 of \cite{GT07}.

\

In this section we will pay special attention to the algebra $\mathcal D(W)$ when the matrix weight $W=W_1\oplus W_2$. Let $N$, $N_1$ and $N_2$ be, respectively, the sizes of $W$, $W_1$ and $W_2$. If $D\in\mathbf D_N$ we will write
$$D=\begin{pmatrix} D_{11}&D_{12}\\D_{21}&D_{22}\end{pmatrix},$$
where $D_{ij}$ is a right-hand size differential operator of size $N_i\times N_j$.

\begin{prop}\label{W1W2} If $W_1$ and $W_2$ are equivalent weight matrices, then
$$\mathcal D(W_1\oplus W_2)\simeq\Mat_2(\mathcal D(W_1))\simeq\Mat_2(\CC)\otimes \mathcal D(W_1);$$
both isomorphims are canonical.
\end{prop}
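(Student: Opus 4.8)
The plan is to exhibit the isomorphism $\mathcal D(W_1\oplus W_2)\simeq\Mat_2(\mathcal D(W_1))$ directly, using the equivalence $W_2\sim W_1$ to transport everything back to $W_1$. First I would fix a nonsingular matrix $L$ with $W_2=LW_1L^*$, and use the Remark preceding the proposition: the map $D\mapsto LDL^{-1}$ gives an isomorphism $\mathcal D(W_1)\simeq\mathcal D(W_2)$ compatible with the eigenvalue maps $\Gamma_n$. This lets me normalize so that, whenever an operator block acts ``between'' the $W_1$-part and the $W_2$-part, I can conjugate the $W_2$-slot by $L$ and reduce all computations to a single weight $W_1$. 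Concretely, I would set $\tilde W=W_1\oplus W_2$ and write $D=(D_{ij})$ in $2\times 2$ block form as in the excerpt, where $D_{ij}\in\mathbf D$ has matrix coefficients of size $N_i\times N_j$.

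The key structural step is to characterize when such a block operator lies in $\mathcal D(\tilde W)$. Let $\{Q^{(1)}_n\}$ be monic orthogonal polynomials for $W_1$; then $\{Q^{(1)}_n\oplus (LQ^{(1)}_nL^{-1})\}$ (suitably ordered) are orthogonal polynomials for $\tilde W$, since $W_1$ and $W_2$ are equivalent via $L$. The condition $Q_n\tilde D=\Gamma_n(\tilde D)Q_n$ for all $n$ then decouples into four scalar-block eigenvalue equations. The equation for $D_{11}$ says exactly $D_{11}\in\mathcal D(W_1)$; the equation for $D_{22}$, after conjugating by $L$, says $L^{-1}D_{22}L\in\mathcal D(W_1)$; and the two off-diagonal equations say that $D_{12}L$ and $L^{-1}D_{21}$ (intertwining operators sending $W_1$-eigenfunctions to $W_1$-eigenfunctions with matching eigenvalues) again lie in $\mathcal D(W_1)$. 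Thus the assignment
\begin{equation*}
D=\begin{pmatrix} D_{11}&D_{12}\\D_{21}&D_{22}\end{pmatrix}\longmapsto
\begin{pmatrix} D_{11}&D_{12}L\\L^{-1}D_{21}&L^{-1}D_{22}L\end{pmatrix}
\end{equation*}
is a bijection onto $\Mat_2(\mathcal D(W_1))$, and I would check it respects composition: conjugating the off-diagonal slots by $L$ exactly absorbs the $L$-factors that appear when multiplying block operators, so matrix-of-operators multiplication matches. That gives the first isomorphism; the second, $\Mat_2(\mathcal D(W_1))\simeq\Mat_2(\CC)\otimes\mathcal D(W_1)$, is the standard identification of $2\times2$ matrices over a ring with $\Mat_2(\CC)\otimes(\text{ring})$, hence canonical.

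The main obstacle I expect is verifying that the off-diagonal blocks are genuinely forced to be $\mathcal D(W_1)$-elements and not something larger: I must show that an intertwiner $D_{12}$ carrying $W_2$-orthogonal polynomials to $W_1$-orthogonal polynomials with a common eigenvalue sequence is, after the twist by $L$, precisely an element of $\mathcal D(W_1)$, with no extra freedom. This rests on the fact that the $\Gamma_n$ for $W_1$ and for $W_2$ coincide under the $L$-conjugation (from the Remark), so that the intertwining eigenvalue conditions become identical to the defining condition of $\mathcal D(W_1)$. I would also need to confirm that ``canonical'' holds as claimed: the isomorphism depends on the choice of $L$, but any two choices differ by an element of the commutant, and one should note (or restrict the claim to) the natural normalization making the statement independent of that choice. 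Once the off-diagonal identification is pinned down, the remaining verifications are the routine algebra of block multiplication, which I would not write out in full.
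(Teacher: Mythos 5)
Your proposal is correct and is essentially the paper's own proof: the paper simply performs your $L$-conjugation at the outset (replacing $W$ by an equivalent weight so that $W_1=W_2$, with monic orthogonal polynomials $\tilde P_n=P_n\oplus P_n$), after which the eigenvalue equation decouples blockwise exactly as you describe, giving $D\in\mathcal D(W)$ if and only if every block $D_{ij}\in\mathcal D(W_1)$, hence $\mathcal D(W)\simeq\Mat_2(\mathcal D(W_1))$. Your explicit twist $D\mapsto\bigl(L_i^{-1}D_{ij}L_j\bigr)$ with $L_1=I$, $L_2=L$ is just that initial normalization (conjugation by the zero-order block operator $I\oplus L$) carried along through the argument, so the two proofs coincide.
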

\begin{proof} We first observe that by changing $W$ by an equivalent weight matrix we can assume that $W_1=W_2$. Moreover, if $\{P_n\}_{n\in\mathbb{N}_0}$ is the sequence of monic orthogonal polynomials with respect to $W_1$ then $\{\tilde P_n=P_n\oplus P_n\}_{n\in\mathbb{N}_0}$ is the sequence of monic  orthogonal polynomials with respect to $W$.

If $D=\begin{pmatrix}D_{11}&D_{12}\\D_{21}&D_{22}\end{pmatrix}\in\mathcal D(W)$, then $\tilde P_nD=\Lambda_n(D)\tilde P_n$, which in matrix notation is equivalent to $$\begin{pmatrix}P_n&0\\0&P_n\end{pmatrix}\begin{pmatrix}D_{11}&D_{12}\\D_{21}&D_{22}
\end{pmatrix}=\begin{pmatrix}\Lambda^n_{11}(D)&\Lambda^n_{12}(D)\\\Lambda^n_{21}(D)&\Lambda^n_{22}
(D)\end{pmatrix}\begin{pmatrix}P_n&0\\0&P_n\end{pmatrix}.$$
 Therefore $P_n D_{ij}=\Gamma^n_{ij}(D)P_n$ for $1\le i,j\le 2$, which implies that $D_{ij}\in\mathcal D(W_1)$.

 Conversely, given $D_{ij}\in\mathcal D(W_1)$ for $1\le i,j\le 2$, then $P_nD_{ij}=\Lambda_n(D_{ij})P_n$. If we define $\Gamma_n=\begin{pmatrix}\Lambda_n(D_{11})&\Lambda_n(D_{12})\\\Lambda_n(D_{21})&\Lambda_n(D_{22})
\end{pmatrix}$, then $\tilde P_n D=\Lambda_n \tilde P_n$. Hence, $D\in\mathcal D(W)$.

Thus, $D\in\mathcal D(W)$ if and only if $D_{ij}\in\mathcal D (W_1)$ completing  the proof of the theorem. \qed
\end{proof}

Proposition \ref{W1W2} admits the following obvious generalization.
\begin{thm} If $W_1, W_2,\dots, W_n$ are equivalent weight matrices, then
$$\mathcal D(W_1\oplus\cdots\oplus W_n)\simeq\Mat_n(\mathcal D(W_1))\simeq\Mat_n(\CC)\otimes \mathcal D(W_1);$$
both isomorphims are canonical.
\end{thm}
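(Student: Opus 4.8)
The plan is to imitate the proof of Proposition \ref{W1W2}, replacing the $2\times 2$ block structure by an $n\times n$ one. First I would reduce to the case in which all the summands coincide. Since $W_1,\dots,W_n$ are equivalent, for each $i$ there is a nonsingular $M_i$ with $W_i=M_iW_1M_i^*$; setting $M=M_1\oplus\cdots\oplus M_n$ then gives $M(W_1\oplus\cdots\oplus W_1)M^*=W_1\oplus\cdots\oplus W_n$, so $W_1\oplus\cdots\oplus W_n\sim W_1^{\oplus n}$. By the Remark above, equivalent weight matrices have canonically isomorphic algebras, via $D\mapsto MDM^{-1}$, so it suffices to treat $W=W_1^{\oplus n}$.

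Next, if $\{P_m\}_{m\in\mathbb N_0}$ is the sequence of monic orthogonal polynomials for $W_1$, then $\tilde P_m=P_m\oplus\cdots\oplus P_m$ ($n$ copies) is the sequence of monic orthogonal polynomials for $W$. Writing $D\in\mathbf D_N$ as an $n\times n$ array of right-hand-side operators $D=(D_{ij})$ with $D_{ij}$ of size $N_1\times N_1$, the eigenvalue relation $\tilde P_mD=\Lambda_m(D)\tilde P_m$ is, in block form, exactly the collection of relations $P_mD_{ij}=\Gamma^m_{ij}(D)P_m$ for $1\le i,j\le n$. Hence $D\in\mathcal D(W)$ if and only if every $D_{ij}\in\mathcal D(W_1)$, and the block-decomposition map $D\mapsto(D_{ij})$ is a bijection $\mathcal D(W)\to\Mat_n(\mathcal D(W_1))$.

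I would then verify that this bijection is an algebra isomorphism: composition of right-hand-side operators respects the block partition, so the $(i,j)$ block of a product $DD'$ is $\sum_k D_{ik}D'_{kj}$, matching matrix multiplication in $\Mat_n(\mathcal D(W_1))$; linearity and the preservation of the identity are immediate. This yields the first, canonical, isomorphism. The second isomorphism $\Mat_n(\mathcal D(W_1))\simeq\Mat_n(\CC)\otimes\mathcal D(W_1)$ is the standard canonical identification of the $n\times n$ matrix algebra over a ring $R$ with $\Mat_n(\CC)\otimes_\CC R$, sending the matrix $E_{ij}\,d$ to $E_{ij}\otimes d$.

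The step requiring the most care, though still routine, is confirming that composition of the block differential operators matches matrix multiplication, i.e.\ that the right-hand-side action introduces no cross terms between blocks; this holds because each $\tilde P_m$ and each block of coefficients are themselves block-diagonal or block objects with the matching partition, so the multiplication splits blockwise exactly as for ordinary matrices. Everything else is bookkeeping identical to the case $n=2$, which is why the statement is an immediate generalization of Proposition \ref{W1W2}.
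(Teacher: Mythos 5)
Your proposal is correct and is exactly the argument the paper intends: the paper offers no separate proof, stating only that the theorem is the ``obvious generalization'' of Proposition~3.2, whose proof you faithfully reproduce with $n\times n$ blocks (including the same reduction, via the Remark on $D\mapsto MDM^{-1}$, to the case $W_1=\cdots=W_n$ and the same blockwise eigenvalue computation with $\tilde P_m=P_m\oplus\cdots\oplus P_m$). Your extra verification that composition of block operators matches matrix multiplication in $\Mat_n(\mathcal D(W_1))$ is a routine point the paper leaves implicit.
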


We move to consider the case of a direct sum of two arbitrary matrix weights $W_1$ and $W_2$ not necessarily equivalent. Let $\{P_n\}_{n\in\mathbb{N}_0}$ and $\{Q_n\}_{n\in\mathbb{N}_0}$ be, respectively, the sequences of monic orthogonal polynomials corresponding to $W_1$ and $W_2$ and let $\tilde P_n=P_n\oplus Q_n$ be the sequence of monic orthogonal polynomials of $W=W_1\oplus W_2$. If $D\in \mathcal D(W)$, then $\tilde P_n D=\Lambda_n(D)\tilde P_n$, which is equivalent to $$\begin{pmatrix}P_n&0\\0&Q_n\end{pmatrix}\begin{pmatrix}D_{11}&D_{12}\\D_{21}&D_{22}
\end{pmatrix}=\begin{pmatrix}\Lambda^n_{11}(D)&\Lambda^n_{12}(D)\\\Lambda^n_{21}(D)&\Lambda^n_{22}
(D)\end{pmatrix}\begin{pmatrix}P_n&0\\0&Q_n\end{pmatrix}.$$
Therefore, $P_n D_{11}=\Lambda^n_{11}(D)P_n$ and $Q_n D_{22}=\Lambda^n_{22}(D)Q_n$ which implies that $D_{11}\in\mathcal D(W_1)$ and that $D_{22}\in\mathcal D(W_2)$. Besides,
$P_n D_{12}=\Lambda^n_{12}(D)Q_n$ and $Q_n D_{21}=\Lambda^n_{21}(D)P_n$. Let
$$\mathbf D_{N_1\times N_2}=\Big\{D=\sum_i\partial^i F_i: F_i\in\Mat_{N_1\times N_2}(\CC)[x]\Big\},$$
$$\mathcal D_{N_1\times N_2}=\Big\{D=\sum_i\partial^i F_i\in \mathbf D_{N_1\times N_2}: \deg F_i\le i\Big\}.$$
It is also interesting to introduce
$$\mathcal D(W_1,W_2)=\{D\in\mathcal D_{N_1\times N_2}:P_n D=\Lambda_n(D)Q_n,\; \Lambda_n(D)\in\Mat_{N_1\times N_2}(\CC),\;n\in\mathbb{N}_0\},$$
$$\mathcal D(W_2,W_1)=\{D\in\mathcal D_{N_2\times N_1}:Q_n D=\Lambda_n(D)P_n,\; \Lambda_n(D)\in\Mat_{N_2\times N_1}(\CC),\;n\in\mathbb{N}_0\}.$$

When $W_1=W_2$ we have $\mathcal D(W_1,W_2)=\mathcal D(W_1)$. We also observe that $\mathcal D(W_1,W_2)$ is a left $\mathcal D(W_1)$-module and a right $\mathcal D(W_2)$-module, and that $\mathcal D(W_2,W_1)$ is a left $\mathcal D(W_2)$-module and a right $\mathcal D(W_1)$-module. Besides $\mathcal D(W_1,W_2)\mathcal D(W_2,W_1)\subset\mathcal D(W_1)$ and $\mathcal D(W_2,W_1)\mathcal D(W_1,W_2)\subset\mathcal D(W_2)$.

Clearly $D\in\mathcal D(W)$ if and only if $D_{11}\in\mathcal D(W_1)$, $D_{22}\in\mathcal D(W_2)$, $D_{12}\in\mathcal D(W_1,W_2)$ and $D_{21}\in\mathcal D(W_2,W_1)$. In other words we have proved the following proposition.
\begin{prop}\label{prop} If $W_1$ and $W_2$ are two matrix weights, then
$$\mathcal D(W_1\oplus W_2)=\begin{pmatrix}\mathcal D(W_1)&\mathcal D(W_1,W_2)\\
\mathcal D(W_2,W_1)&\mathcal D(W_2)\end{pmatrix}.$$
\end{prop}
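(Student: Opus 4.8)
The plan is to reduce the statement to a blockwise reading of the intertwining relation defining $\mathcal D(W)$. Because $W=W_1\oplus W_2$ is block diagonal, its sequence of monic orthogonal polynomials is $\tilde P_n=P_n\oplus Q_n$, so $D\in\mathcal D(W)$ amounts to the single family of relations $\tilde P_n D=\Lambda_n(D)\tilde P_n$ for all $n$. First I would write $D=(D_{ij})$ and $\Lambda_n(D)=(\Lambda^n_{ij})$ as $2\times2$ arrays of blocks, with $D_{ij}$ of size $N_i\times N_j$, and expand both products. Since $\tilde P_n$ is block diagonal, the matrix equation decouples into the four scalar-block equations
\begin{gather*}
P_nD_{11}=\Lambda^n_{11}P_n,\qquad P_nD_{12}=\Lambda^n_{12}Q_n,\\
Q_nD_{21}=\Lambda^n_{21}P_n,\qquad Q_nD_{22}=\Lambda^n_{22}Q_n,
\end{gather*}
which are exactly the defining relations for $D_{11}\in\mathcal D(W_1)$, $D_{12}\in\mathcal D(W_1,W_2)$, $D_{21}\in\mathcal D(W_2,W_1)$ and $D_{22}\in\mathcal D(W_2)$, respectively. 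This yields the inclusion $\subseteq$.

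For the reverse inclusion I would begin with an arbitrary quadruple $D_{11}\in\mathcal D(W_1)$, $D_{12}\in\mathcal D(W_1,W_2)$, $D_{21}\in\mathcal D(W_2,W_1)$, $D_{22}\in\mathcal D(W_2)$, take the intertwining matrices $\Lambda_n(D_{ij})$ guaranteed by each membership, package them into $\Lambda_n(D)=\bigl(\begin{smallmatrix}\Lambda_n(D_{11})&\Lambda_n(D_{12})\\\Lambda_n(D_{21})&\Lambda_n(D_{22})\end{smallmatrix}\bigr)$, and assemble $D=(D_{ij})$. Running the same block multiplication in reverse recovers $\tilde P_n D=\Lambda_n(D)\tilde P_n$, so $D\in\mathcal D(W)$, which gives the opposite inclusion and hence the claimed equality.

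The only step that is more than bookkeeping is verifying that the off-diagonal blocks really lie in the rectangular spaces $\mathcal D_{N_1\times N_2}$ and $\mathcal D_{N_2\times N_1}$, i.e. that they meet the degree bound $\deg F_i\le i$ built into those definitions. This is forced rather than assumed: in $P_nD_{12}=\Lambda^n_{12}Q_n$ the right-hand side has degree at most $n$, while a summand $\partial^i(P_n)F_i$ of the left-hand side has degree $(n-i)+\deg F_i$; letting $n$ range over all sufficiently large values and comparing leading terms forces $\deg F_i\le i$, exactly as in the square case underlying $\mathcal D(W)\subseteq\mathcal D$. I expect this degree check to be the only real obstacle; once it is in place the proposition is simply the formalization of the block identity above.
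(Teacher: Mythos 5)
Your proof is correct and follows essentially the same route as the paper: the paper likewise notes that $\tilde P_n=P_n\oplus Q_n$ is the monic orthogonal sequence for $W_1\oplus W_2$ and decouples $\tilde P_n D=\Lambda_n(D)\tilde P_n$ into the four block relations $P_nD_{11}=\Lambda^n_{11}P_n$, $P_nD_{12}=\Lambda^n_{12}Q_n$, $Q_nD_{21}=\Lambda^n_{21}P_n$, $Q_nD_{22}=\Lambda^n_{22}Q_n$, reading these as exactly the defining memberships in $\mathcal D(W_1)$, $\mathcal D(W_1,W_2)$, $\mathcal D(W_2,W_1)$, $\mathcal D(W_2)$ and reassembling for the converse. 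Your explicit check that the off-diagonal blocks meet the degree bound $\deg F_i\le i$ built into $\mathcal D_{N_1\times N_2}$ is a detail the paper absorbs into its ``clearly'' (it is the standard fact from \cite{GT07} that operators with polynomial eigenfunctions automatically lie in $\mathcal D$, applied blockwise), so you are, if anything, slightly more careful than the source.
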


It is worth to remark at this point that if one is interested in studying, for example, {\it classical pairs} $(W,D)$ consisting of a weight matrix $W$ and a second order symmetric differential operator $D\in\mathcal D$ or the algebra $\mathcal D(W)$, it is not enough to consider irreducible weights.

We also point out that the above proposition generalize to the case of a direct sum of $r\ge2$ matrix weights, giving a similar structure for $\mathcal D(W_1\oplus\dots\oplus W_r)$.

\subsection{An Example}
\

The pair $(W,D)$ considered here  is borrowed from 5.1 in \cite{GPT03} by taking $\alpha=\beta$. Let  $W$ be the following  matrix weight
$$W(x)=x^\alpha(1-x)^\alpha\begin{pmatrix}1&-2x+1\\-2x+1&1\end{pmatrix}=x^\alpha(1-x)^\alpha(xF_1+F_0),$$
supported in the closed interval $[0,1]$ with $\alpha>-1$.
Let us first compute the commuting space $\mathcal C=\{T\in\Mat_2(\CC):TW(x)=W(x)T^*\; \text{for all}\; x\}$ of $W$. A $2\times 2$-matrix $T=(t_{ij})_{i,j}\in\mathcal C$ if and only if $TF_1=F_1T^*$ and $TF_0=F_0T^*$, or equivalently if and only if $TI=IT^*$ and $TJ=JT^*$ where $I$ is the identity matrix and $J$ is the anti identity matrix. This immediately leads to $T=sI+tJ$, $s,t\in\RR$. Therefore, Theorem \ref{reducibility1} says that $W$ reduces to a direct sum of scalar weights and indicates how we can find them.

A nontrivial idempotent in $\mathcal C$ is obtained precisely for $s=1/2$ and $t=\pm 1/2$ Let us take T=$(I+J)/2$. The eigenvalues of a nontrivial idempotent transformation are $1$ and $0$ and the corresponding eigenspaces are $V_1=\{Tv: v\in\CC^2\}$ and $V_0=\{v-Tv: v\in\CC^2\}$. Hence the column vectors $(1,1)^t\in V_1$ and $(1,-1)^t\in V_0$. If we take  $M^{-1}=\begin{pmatrix}1&1\\1&-1\end{pmatrix}$, then $MTM^{-1}=\diag (1,0)$. From the proof of Theorem \ref{reducibility1} we know that $W'(x)=MW(x)M^*$ is a diagonal weight. In fact
\begin{equation*}
\begin{split}
W'(x)&=\frac14x^\alpha(1-x)^\alpha\begin{pmatrix}1&1\\1&-1\end{pmatrix}
\begin{pmatrix}1&-2x+1\\-2x+1&1\end{pmatrix}\begin{pmatrix}1&1\\1&-1\end{pmatrix}\\
&=x^\alpha(1-x)^\alpha\begin{pmatrix}1-x&0\\0&x\end{pmatrix}.
\end{split}
\end{equation*}
Let us consider the scalar weights $w_1(x)=x^\alpha(1-x)^{\alpha+1}$ and $w_2(x)=x^{\alpha+1}(1-x)^{\alpha}$ supported in $[0,1]$. Then $W'=w_1\oplus w_2$.

\

From 5.1 in \cite{GPT03}, by transposing, we take the following symmetric differential operator with respect to $W$:
$$D=\partial^2x(1-x)+\partial(X-xU)+V,$$
\begin{equation*}
\begin{split}
V&=\begin{pmatrix}0&0\\0&-2(\alpha+1)(1+u)\end{pmatrix},\qquad X=\begin{pmatrix}\alpha+1-\frac{u}2&\frac{u}2\\-\frac{u}2-1&\alpha+2+\frac{u}2\end{pmatrix},\\
U&=\begin{pmatrix}2(\alpha+1)-u&0\\0&2(\alpha+2)+u\end{pmatrix},
\end{split}
\end{equation*}
$u$ being a real parameter.
Therefore $D'=MDM^{-1}$ is symmetric with respect to $W'$. Then
$$D'=\partial^2x(1-x)+\partial(X'-xU')+V',$$
\begin{equation*}
\begin{split}
V'&=MVM^{-1}=(\alpha+1)(1+u)\begin{pmatrix}-1&1\\1&-1\end{pmatrix},\\
X'&=MXM^{-1}=\begin{pmatrix}\alpha+1&-1-u\\0&\alpha+2\end{pmatrix},\\
U'&=MUM^{-1}=\begin{pmatrix}2\alpha+3&-1-u\\-1-u&2\alpha+3\end{pmatrix}.
\end{split}
\end{equation*}
If we write $D'=\begin{pmatrix}D'_{11}&D'_{12}\\D'_{21}&D'_{22}\end{pmatrix}$, we have
\begin{equation*}
\begin{split}
D'_{11}&=\big(\partial^2x(1-x)+\partial(\alpha+1-(2\alpha+3)x)-(\alpha+1)(1+u)\big)E_{11},\\
D'_{12}&=\big(-\partial(1+u)(1-x)+(\alpha+1)(1+u)\big)E_{12},\\
D'_{21}&=\big(\partial(1+u)x+(\alpha+1)(1+u)\big)E_{21},\\
D'_{22}&=\big(\partial^2x(1-x)+\partial(\alpha+2-(2\alpha+3)x)-(\alpha+1)(1+u)\big)E_{22},
\end{split}
\end{equation*}
where $\{E_{ij}\}$ is the canonical basis of $\Mat_2(\CC)$.
Let $\{p_n\}_{n\in\mathbb{N}_0}$ and $\{q_n\}_{n\in\mathbb{N}_0}$ be the sequences of monic orthogonal polynomials associated, respectively, to $w_1$ and $w_2$. Then $\{P_n=p_n\oplus q_n\}_{n\in\mathbb{N}_0}$ is the sequence of monic orthogonal polynomials with respect to $W'$.
If we use Proposition 2.7 in \cite{GT07} and $P_nD'=\Lambda'_nP_n$, then we get
$$\Lambda'_n=\begin{pmatrix}-n(n+2\alpha+2)-(\alpha+1)(1+u)&(1+u)(n+\alpha+1)\\
 (1+u)(n+\alpha+1)&-n(n+2\alpha+2)-(\alpha+1)(1+u)\end{pmatrix}.$$

In particular we obtain
$$x(1-x)p''_n+(\alpha+1-(2\alpha+3)x)p'_n+n(n+2\alpha+2)p_n=0.$$
As an immediate consequence of the hypergeometric representation of Jacobi polynomials
$$P^{(\alpha,\beta)}_n(y)=\frac{(\alpha+1)_n}{n!}\;_2F_1\left(\begin{matrix}-n,\,n+\alpha+\beta+1 \\\alpha+1\end{matrix};\frac{1-y}2\right)$$
we obtain
$$p_n(x)=\frac{(-1)^nn!}{(n+2\alpha+2)_n}P_n^{(\alpha,\alpha+1)}(1-2x),$$
$$q_n(x)=\frac{(-1)^nn!}{(n+2\alpha+2)_n}P_n^{(\alpha+1,\alpha)}(1-2x)$$
since the leading coefficient of $P^{(\alpha,\beta)}_n$ is $(n+\alpha+\beta+1)_n/2^n n!$.
From the identity $P_n^{\alpha,\beta}(-x)=(-1)^nP_n^{\beta,\alpha}(x)$ we get that
$$q_n(x)=(-1)^np_n(1-x).$$

It is wise and rewarding to verify the identity: $p_nD_{12}=\Lambda(D_{12})q_n$ which is equivalent to
\begin{equation*}\label{pq}
-(1-x)p'_n(x)+(\alpha+1)p_n=(n+\alpha+1)q_n(x).
\end{equation*}
By using the identity $\frac{d}{dx}P_n^{(\alpha,\beta)}=\frac12(n+\alpha+\beta+1)P_{n-1}^{(\alpha+1,\beta+1)}(x)$  we see that \eqref{pq} is equivalent to
\begin{equation*}
\begin{split}
(n+2\alpha+2)(1-x)P_{n-1}^{(\alpha+1,\alpha+2)}(1-2x)+(\alpha&+1)P_n^{(\alpha,\alpha+1)}(1-2x)\\
&=(n+\alpha+1)P_n^{(\alpha+1,\alpha)}(1-2x),
\end{split}
\end{equation*}
or to
\begin{equation*}
\begin{split}
n(n&+2\alpha+2)(1-x)\,_2F_1\left(\begin{matrix}-n+1,\,n+2\alpha+3 \\\alpha+2\end{matrix};x\right)\\
&+(\alpha+1)^2\,_2F_1\left(\begin{matrix}-n,\,n+2\alpha+2 \\\alpha+1\end{matrix};x\right)
=(n+\alpha+1)^2\,_2F_1\left(\begin{matrix}-n,\,n+2\alpha+2 \\\alpha+2\end{matrix};x\right).
\end{split}
\end{equation*}
By equating the coefficients of $x^j$ on both sides one can easily check that they are equal.

\

It is worth to point out now the following facts: $D'_{12}D'_{21}\in\mathcal D(w_1)$ and
$$\Lambda'_n(D'_{12}D'_{21})=\Lambda'_n(D'_{12})\Lambda'_n(D'_{21})=(1+u)^2(n+\alpha+1)^2.$$
Also $D'_{11}\in\mathcal D(w_1)$ and
$$\Lambda'_n(D'_{11})=-n(n+2\alpha+2)-(\alpha+1)(1+u).$$
If $u\ne -1$, then $\Lambda'_n(D'_{11}-(\alpha+1)(\alpha-u))=-(1+u)^{-2}\Lambda'_n(D'_{12}D'_{21})$. From here, and since the set of representations $\Lambda'_n$ ($n\in\mathbb{N}_0$) separates the elements of $\mathcal D(w_1)$ (Proposition 2.8,\cite{GT07}), the following interesting factorization follows
$$\partial^2x(1-x)+\partial(\alpha+1-(2\alpha+3)x)-(\alpha+1)^2=(\partial(1-x)-\alpha-1)
(\partial x+\alpha+1).$$
Similarly we have
$$\partial^2x(1-x)+\partial(\alpha+2-(2\alpha+3)x)-(\alpha+1)^2=(\partial x+\alpha+1)(\partial(1-x)-\alpha-1)
,$$
and these two second order differential operators are related by a Darboux transformation.
When $u\ne-1$ let
\begin{equation*}
\begin{split}
D_{11}&=\big(\partial^2x(1-x)+\partial(\alpha+1-(2\alpha+3)x)-(\alpha+1)^2\big)E_{11}\in\mathcal D(w_1),\\
D_{12}&=\big(-\partial(1-x)+\alpha+1\big)E_{12}\in\mathcal D(w_1,w_2),\\
D_{21}&=\big(\partial x+\alpha+1\big)E_{21}\in\mathcal D(w_2,w_1),\\
D_{22}&=\big(\partial^2x(1-x)+\partial(\alpha+2-(2\alpha+3)x)-(\alpha+1)^2\big)E_{22}\in\mathcal D(w_2).
\end{split}
\end{equation*}

\begin{prop} $\mathcal D(w_1\oplus w_2)$ can be presented as the complex algebra with generators $E_{11}, E_{22}, D_{12}$, $D_{21}$ and the relations derived from the matrix multiplication table $E_{ij}E_{rs}=\delta_{jr}E_{is}$. Moreover
$$\mathcal D(w_1\oplus w_2)=\begin{pmatrix}\mathcal D(w_1)&\mathcal D(w_1,w_2)\\
\mathcal D(w_2,w_1)&\mathcal D(w_2)\end{pmatrix}.$$
If we define $D_{11}=D_{12}D_{21}$ and $D_{22}=D_{21}D_{12}$, then $\mathcal D(w_1)$ is the polynomial algebra $\CC[D_{11}]$ and $\mathcal D(w_2)=\CC[D_{22}]$. While $\mathcal D(w_1,w_2)$ is the free left $\mathcal D(w_1)$-module (resp. the free right $\mathcal D(w_2)$-module  generated by $D_{12}$. Similarly $\mathcal D(w_2,w_1)$ is the free left $\mathcal D(w_2)$-module (resp. the free right $\mathcal D(w_1)$-module  generated by $D_{21}$. The center of $\mathcal D(w_1\oplus w_2)$ is the polynomial algebra $\CC[D_{11}+ D_{22}]$. Moreover, $\mathcal D(w_1\oplus w_2)$ is a free module over its center with basis
$\{E_{11},E_{22},D_{12},D_{21}\}$.
\end{prop}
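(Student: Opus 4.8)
The plan is to reduce everything to the block decomposition already supplied by Proposition \ref{prop}, namely
$$\mathcal D(w_1\oplus w_2)=\begin{pmatrix}\mathcal D(w_1)&\mathcal D(w_1,w_2)\\\mathcal D(w_2,w_1)&\mathcal D(w_2)\end{pmatrix},$$
and then to pin down each of the four blocks together with the way they multiply. Since $w_1(x)=x^\alpha(1-x)^{\alpha+1}$ and $w_2(x)=x^{\alpha+1}(1-x)^{\alpha}$ are classical (Jacobi) scalar weights, the scalar theory recalled in the Introduction (see \cite{M05}) gives that each $\mathcal D(w_i)$ is the polynomial algebra generated by the single second order operator having the monic orthogonal polynomials as eigenfunctions, and that $\mathcal D(w_i)$ contains no nonzero operator of odd order. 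The factorization computed just above the statement identifies these generators as $D_{11}=D_{12}D_{21}$ and $D_{22}=D_{21}D_{12}$, so $\mathcal D(w_1)=\CC[D_{11}]$ and $\mathcal D(w_2)=\CC[D_{22}]$, which also yields the second displayed assertion of the statement. Throughout I would record the eigenvalues, since they drive the whole argument: from $-(1-x)p_n'+(\alpha+1)p_n=(n+\alpha+1)q_n$ and its mirror image under $x\mapsto 1-x$ (which swaps $w_1\leftrightarrow w_2$ and $D_{12}\leftrightarrow D_{21}$) one gets $\Lambda_n(D_{12})=\Lambda_n(D_{21})=n+\alpha+1$, hence $\Lambda_n(D_{11})=\Lambda_n(D_{22})=(n+\alpha+1)^2$.

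The description of the off-diagonal blocks is the real content, and the step I expect to be the main obstacle. I would prove that $\mathcal D(w_1,w_2)=\mathcal D(w_1)\,D_{12}=D_{12}\,\mathcal D(w_2)$, free of rank one on either side, and symmetrically for $\mathcal D(w_2,w_1)$ with $D_{21}$. Injectivity of $A\mapsto AD_{12}$ is easy: if $AD_{12}=0$ then $AD_{11}=(AD_{12})D_{21}=0$, and as $\CC[D_{11}]$ is an integral domain with $D_{11}\ne0$ we get $A=0$. Surjectivity is the delicate point, and I would argue through eigenvalues. Write $m=n+\alpha+1$. Any $D\in\mathcal D(w_1,w_2)$ has eigenvalue sequence $\Lambda_n(D)$ polynomial in $n$ (Proposition 2.7 of \cite{GT07}); since $DD_{21}\in\mathcal D(w_1,w_2)\mathcal D(w_2,w_1)\subset\mathcal D(w_1)=\CC[D_{11}]$, its eigenvalue $\Lambda_n(D)\,m$ is a polynomial in $m^2$, say $R(m^2)$. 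Because $R(m^2)$ is even in $m$ and divisible by $m$, it is divisible by $m^2$, whence $\Lambda_n(D)=m\,\tilde R(m^2)$ for some polynomial $\tilde R$; but this is exactly the eigenvalue sequence of $\tilde R(D_{11})D_{12}$. Since the eigenvalue map separates operators (Proposition 2.8 of \cite{GT07}, extended to the cross spaces because $p_nD=0$ for all $n$ forces $D=0$), we conclude $D=\tilde R(D_{11})D_{12}$. The same computation applied to $D_{12}D_{22}^k$ shows $D_{12}\mathcal D(w_2)$ realizes the identical eigenvalue sequences, so $\mathcal D(w_1,w_2)$ is free of rank one as a right $\mathcal D(w_2)$-module as well, and the symmetric argument handles $\mathcal D(w_2,w_1)$.

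With the four blocks understood, the presentation by generators and relations follows by assembly. The block-placement identities $E_{11}D_{12}=D_{12}=D_{12}E_{22}$, $D_{21}E_{11}=D_{21}=E_{22}D_{21}$, $E_{11}+E_{22}=1$, $E_{11}^2=E_{11}$, $E_{22}^2=E_{22}$, $E_{11}E_{22}=E_{22}E_{11}=0$ and $D_{12}^2=D_{21}^2=0$ are precisely the relations $E_{ij}E_{rs}=\delta_{jr}E_{is}$ read off the matrix units (with $D_{12},D_{21}$ in the roles of $E_{12},E_{21}$). Together with $\mathcal D(w_1)=\CC[D_{12}D_{21}]$, $\mathcal D(w_2)=\CC[D_{21}D_{12}]$ and the module statements above, they show that $E_{11},E_{22},D_{12},D_{21}$ generate $\mathcal D(w_1\oplus w_2)$ and that these relations are complete.

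For the center I would first note that commuting with $E_{11}$ and $E_{22}$ forces a central element to be block diagonal, $\begin{pmatrix}A&0\\0&B\end{pmatrix}$ with $A\in\CC[D_{11}]$ and $B\in\CC[D_{22}]$. Commuting with $D_{12}$ gives $AD_{12}=D_{12}B$; writing $A=P(D_{11})$, $B=Q(D_{22})$ and comparing eigenvalues yields $P(m^2)\,m=m\,Q(m^2)$ for all $n$, so $P=Q$. Hence the center consists exactly of the elements $\begin{pmatrix}P(D_{11})&0\\0&P(D_{22})\end{pmatrix}$, and since $(D_{11}+D_{22})^k=\begin{pmatrix}D_{11}^k&0\\0&D_{22}^k\end{pmatrix}$ these are precisely the elements of $\CC[D_{11}+D_{22}]$; the map $P\mapsto P(D_{11}+D_{22})$ is injective because $D_{11}$ has infinitely many distinct eigenvalues, so the center is the polynomial algebra $\CC[D_{11}+D_{22}]$. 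Finally, setting $c=D_{11}+D_{22}$ and computing $c^kE_{11}=D_{11}^kE_{11}$, $c^kE_{22}=D_{22}^kE_{22}$, $c^kD_{12}=D_{11}^kD_{12}$ and $c^kD_{21}=D_{22}^kD_{21}$, one sees that $\CC[c]E_{11}$, $\CC[c]E_{22}$, $\CC[c]D_{12}$ and $\CC[c]D_{21}$ fill out the four blocks, each freely of rank one over the center by the diagonal and cross-block freeness already established. Therefore $\mathcal D(w_1\oplus w_2)$ is free over its center with basis $\{E_{11},E_{22},D_{12},D_{21}\}$, completing the proof.
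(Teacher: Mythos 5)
Your proposal is correct, and it reaches the key structural facts by a genuinely different route than the paper. The crux in both cases is showing $\mathcal D(w_1,w_2)=\mathcal D(w_1)D_{12}$ (freely), and here the paper argues entirely at the level of the differential operators themselves: it first shows $\mathcal D(w_1,w_2)$ contains no even-order operators (if $D$ had even order, $DD_{21}$ would be an odd-order element of a diagonal block $\CC[D_{ii}]$, impossible, with the leading-coefficient computation showing the order cannot drop), and then runs an induction on the order $2s+1$, subtracting $(-1)^sF_{2s+1}^{2s+1}D_{12}$-type corrections to strip the top coefficient, with explicit bookkeeping of $\deg F_i\le i$. You instead work on the eigenvalue side: writing $m=n+\alpha+1$, you use $\Lambda_n(D_{12})=\Lambda_n(D_{21})=m$, the polynomiality of $n\mapsto\Lambda_n(D)$, and the even/odd parity of $\Lambda_n(DD_{21})=\Lambda_n(D)\,m=R(m^2)$ to conclude $\Lambda_n(D)=m\tilde R(m^2)=\Lambda_n(\tilde R(D_{11})D_{12})$, then invoke faithfulness of the eigenvalue maps to get $D=\tilde R(D_{11})D_{12}$. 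This is shorter and automatically recovers the paper's odd-order observation as a byproduct, but it leans on extending Propositions 2.7 and 2.8 of \cite{GT07} from $\mathcal D(W)$ to the rectangular spaces $\mathcal D(w_i,w_j)$; your one-line justifications (leading-coefficient comparison gives $\Lambda_n(D)=\sum_i[n]_iF^i_i$, and $p_nD=0$ for all $n$ forces $D=0$ since the $p_n$ span $\CC[x]$) are correct but should be recorded explicitly, since the cited propositions are stated only for square weights. Your center computation is essentially the paper's (the paper deduces $p_{11}=p_{22}$ from $p_{11}(D_{11})D_{12}=D_{12}p_{22}(D_{22})=p_{22}(D_{11})D_{12}$ and separation by the $\Lambda_n$, you compare eigenvalues directly), and your verification of freeness over $\CC[D_{11}+D_{22}]$ matches the paper's closing identities. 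One caution on the presentation claim: your relation list is fine as far as it goes, but note that $D_{12}D_{21}=D_{11}\ne E_{11}$, so the phrase ``relations derived from the matrix multiplication table'' cannot include the analogue of $E_{12}E_{21}=E_{11}$; the paper itself leaves this point informal, so your reading (block-placement relations plus $D_{12}^2=D_{21}^2=0$, with completeness following from the free rank-one module structure of each block) is a reasonable, and arguably more explicit, rendering of what the statement asserts.
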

\begin{proof}
In the classical cases of Bessel, Hermite, Jacobi and Laguerre scalar weights $w=w(x)$ the algebra $\mathcal D(w)$ is well understood: it is the polynomial algebra in any second order differential operator $\in\mathcal D(w)$ (cf. \cite{M05}). Therefore in our case
$\mathcal D(w_1)=\CC[D_{11}]$ and $\mathcal D(w_2)=\CC[D_{22}]$.

Now we will show that $\mathcal D(w_1,w_2)$ has no differential operators of even order. In fact, if $D\in\mathcal D(w_1,w_2)$ were of even order, then $DD_{21}\in\mathcal D(w_2)$ would be of odd order. This is a contradiction  which proves our assertion. Therefore we can assume that $D\in\mathcal D(w_1,w_2)$ is of the form
$$D=\sum_{i=0}^{2s+1}\partial^iF_i(x),\quad F_i(x)=\sum_{j=0}^ix^jF_j^i.$$
The coefficient of $\partial^{2(s+1)}$ in $DD_{21}$ is $xF_{2s+1}(x)$ whose leading coefficient is $F_{2s+1}^{2s+1}$. But the coefficient of a differential operator in $\mathcal D(w_2)$ of order $2(s+1)$ is a polynomial of degree $2(s+1)$. Therefore $F_{2s+1}^{2s+1}\ne0$.

By induction on the order of a differential operator $D\in\mathcal D(w_1,w_2)$ we will prove that $\mathcal D(w_1,w_2)=\mathcal D(w_1)D_{12}$. We already know that in $\mathcal D(w_1,w_2)$ there is no nonzero constant differential operator. Let $D\in\mathcal D(w_1,w_2)$ be of degree one. Then $D=\big(\partial(x F_1^1+F_0^1)+F_0^0\big)E_{12}$ with $F_1^1, F_0^1, F_0^0\in\CC$. We consider $D-F_1^1D_{12}\in\mathcal D(w_1,w_2)$. Since the coefficient of $\partial$ is constant it follows that $D-F_1^1D_{12}=0$. Now assume that $D\in\mathcal D(w_1,w_2)$ is of order $2s+1\ge3$. Then
$D=\sum_{i=0}^{2s+1}\partial^i F_i(x)$ with $F_{2s+1}(x)=\big(x^{2s+1}F_{2s+1}^{2s+1}+\cdots\big)E_{12}$, where the dots stand for lower degree terms and $F_{2s+1}^{2s+1}\in\CC^\times$.

We now consider $\tilde D=D-(-1)^sF_{2s+1}^{2s+1}D_{12}\in\mathcal D(w_1,w_2)$. By construction the degree of the polynomial coefficient of $\partial^{2s+1}$ in $\tilde D$ is of degree less than $2s+1$. Therefore $\mathrm{ord}(\tilde D)<2s+1$. By the inductive hypothesis there exists $D'\in\mathcal D(w_1)$ such that $\tilde D=D'D_{12}$, completing the proof that $\mathcal D(w_1,w_2)=\mathcal D(w_1)D_{12}$. Since it is clear that $D'D_{12}=0$ implies $D'=0$, we have proved that $\mathcal D(w_1,w_2)$ is a free left $\mathcal D(w_1)$-module generated by $D_{12}$.

By observing that $p(D_{11})D_{12}=p(D_{12}D_{21})D_{12}=D_{12}p(D_{22})$ for any $p\in\CC[x]$, it follows that $D(w_1,w_2)$ is the free right $\mathcal D(w_2)$-module  generated by $D_{12}$. In a similar way can be establish that $\mathcal D(w_2,w_1)$ is the free left $\mathcal D(w_2)$-module (resp. the free right $\mathcal D(w_1)$-module  generated by $D_{21}$.

Let $D$ be an element in the center of $\mathcal D(w_1\oplus w_2)$. If $D=p_{11}(D_{11})+
p_{12}(D_{11})D_{12}+p_{21}(D_{22})D_{21}+p_{22}(D_{22})$, $p_{11}, p_{12}, p_{21}, p_{22} \in \CC[x]$, then
$$DD_{12}=p_{11}(D_{11})D_{12}+p_{21}(D_{22})D_{22}\in\mathcal D(w_1,w_2)\oplus\mathcal D(w_2),$$
and
$$D_{12}D=D_{12}p_{21}(D_{22})D_{21}+D_{12}p_{22}(D_{22})\in\mathcal D(w_1)\oplus\mathcal D(w_1,w_2).$$
Hence $p_{21}(D_{22})=0$ and $p_{11}(D_{11})D_{12}=D_{12}p_{22}(D_{22})=p_{22}(D_{11})D_{12}$. Therefore $p_{11}=p_{22}$.
Similarly we have,
$$DD_{21}=p_{12}(D_{11})D_{11}+p_{22}(D_{22})D_{21}\in\mathcal D(w_1)\oplus\mathcal D(w_2,w_1),$$
and
$$D_{21}D=D_{21}p_{11}(D_{11})+D_{21}p_{12}(D_{11})D_{12}\in\mathcal D(w_2,w_1)\oplus\mathcal D(w_2).$$
We conclude that $p_{12}(D_{11})=0$. Therefore, if the zero order term of $D$ is zero, then  $D=p_{11}(D_{11})+p_{11}(D_{22})=p_{11}(D_{11}+D_{22})$, because $D_{11}D_{22}=D_{22}D_{11}=0$. Thus in any case $D=p(D_{11}+D_{22})$, $p\in\CC[x]$. Conversely, if $D=p(D_{11}+D_{22})$, then it is obvious that $D$ commutes with all elements of $\mathcal D(w_1)$ and of $\mathcal D(w_2)$. Moreover, $DD_{12}=p(D_{11}+D_{22})D_{12}=p(D_{11})D_{12}=D_{12}p(D_{22})
=D_{12}p(D_{11}+D_{22})$. In the same way one proves that $DD_{21}=D_{21}D$.
So, the center of $\mathcal D(w_1\oplus w_2)$ is equal to $\CC[D_{11}+ D_{22}]$.

Finally we point out that,
\begin{equation*}\begin{split}
p(D_{11}+D_{22})E_{11}&=p(D_{11}),\hskip.95cm p(D_{11}+D_{22})E_{22}=p(D_{22}),\\
p(D_{11}+D_{22})D_{12}&=p(D_{11})D_{12},\quad p(D_{11}+D_{22})D_{21}=p(D_{22})D_{21}.
\end{split}
\end{equation*}
From here it is easy to see that $\{E_{11},E_{22},D_{12},D_{21}\}$ is a basis of $\mathcal D(w_1\oplus w_2)$ over the polynomial ring $\CC[D_{11}+D_{22}]$. The proposition is proved. \qed
\end{proof}
\end{section}

\begin{section}{Criteria}

In this final section we give some criteria to decide whether a weight is irreducible or not.
First we state the following elementary lemma.
\begin{lem}\label{moments} Let $w=w(x)$ be an integrable function in a finite interval $(a,b)$
such that $x^kw(x)$ are integrable for all $k\in\NN_0$. If
$$\int_a^b x^kw(x)\,dx=0,\quad\text{for all}\quad k\in\NN_0,$$
then $w=0$.
\end{lem}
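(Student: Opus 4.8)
The plan is to prove this classical moment-vanishing lemma, which is essentially the statement that the monomials $\{x^k\}$ span a dense subspace of a suitable function space on the bounded interval $(a,b)$, so that an integrable function orthogonal to all of them must vanish. First I would restate the hypothesis as saying that $\int_a^b p(x)w(x)\,dx = 0$ for every polynomial $p$, which follows immediately from the given vanishing of all monomial moments by linearity of the integral. The goal is then to upgrade this from polynomials to a dense enough class of test functions to conclude $w=0$ almost everywhere.

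The key step is a Weierstrass-type approximation argument. Since $(a,b)$ is a \emph{finite} interval, the Stone--Weierstrass theorem guarantees that every continuous function on $[a,b]$ is a uniform limit of polynomials. Given any continuous $g$ on $[a,b]$, I would pick polynomials $p_n \to g$ uniformly and pass to the limit under the integral: because $w$ is integrable and $p_n \to g$ uniformly on a bounded interval, dominated convergence (or the simple estimate $\bigl|\int_a^b (p_n - g)w\,dx\bigr| \le \|p_n - g\|_\infty \int_a^b |w|\,dx$) gives $\int_a^b g(x)w(x)\,dx = 0$ for all continuous $g$. Since the continuous functions are dense in $L^\infty$ against an $L^1$ weight in the appropriate sense, this forces $w=0$ as an element of $L^1(a,b)$, i.e.\ $w=0$ almost everywhere.

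To turn ``$\int g w = 0$ for all continuous $g$'' into ``$w=0$'' I would use a standard duality/density fact: an $L^1$ function that integrates to zero against every continuous function (equivalently, against a dense family in $L^\infty$, or against indicator functions of intervals obtained as uniform limits) must be the zero function almost everywhere. Concretely, approximating indicator functions $\chi_{[c,d]}$ by continuous functions shows $\int_c^d w\,dx = 0$ for every subinterval, and the Lebesgue differentiation theorem then yields $w=0$ a.e.

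The main obstacle, and the only place where care is genuinely needed, is the finiteness of the interval: uniform polynomial approximation of continuous functions via Weierstrass is exactly what fails on unbounded intervals, which is precisely why the lemma is stated for a finite $(a,b)$ and why, as the introduction of the paper emphasizes, the analogous rigidity results break down in the unbounded-support case. I expect the argument to be entirely routine once this is recognized; no deep input beyond Stone--Weierstrass and a density argument in $L^1$ is required.
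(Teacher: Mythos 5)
Your proposal is correct and takes essentially the same route as the paper: both reduce the hypothesis to $\int_a^b p(x)w(x)\,dx=0$ for all polynomials, then use Stone--Weierstrass on the \emph{finite} interval together with the uniform estimate $\bigl|\int_a^b (p-g)(x)w(x)\,dx\bigr|\le \|p-g\|_\infty \int_a^b |w(x)|\,dx$ to get $\int_a^b g(x)w(x)\,dx=0$ for every continuous $g$. The only (immaterial) difference is the concluding step: the paper cites regularity of the Borel measure $w\,dx$ and the Riesz representation theorem, whereas you approximate indicator functions of subintervals and apply Lebesgue differentiation---both are standard and equally valid ways to conclude $w=0$ almost everywhere.
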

\begin{proof}
By hypothesis, if $p\in\CC[x]$ then $\int_a^b p(x)w(x)\,dx=0$. Given $f\in C([a,b])$ and $\epsilon>0$, by Stone-Weirstrass theorem, there exists $p\in\CC[x]$ such that
$\vert f(x)-p(x)\vert<\epsilon$ for all $x\in[a,b]$. Thus,
$$\Big\vert\int_a^b f(x)w(x)\,dx\Big\vert\le\int_a^b\vert f(x)-p(x)\vert \vert w(x)\vert\,dx<\epsilon\int_a^b\vert w(x)\vert\,dx.$$
Therefore $\int_a^b f(x)w(x)\,dx=0$. The Borel measure defined by $w$ is regular, hence by Riesz representation theorem $w=0$. \qed
\end{proof}

\

In the next theorem we give an alternative description of the real vector space $\mathcal C_\RR$, introduced in \ref{CR}, of a matrix weight $W$ defined on a finite interval $(a,b)$.

\begin{thm}\label{criterio1}  If $W$ is a matrix weight of size $N$ on a finite interval $(a,b)$, let $\{P_n\}_{n\in\mathbb{N}_0}$ be the sequence of monic orthogonal polynomials and let $M_0$ be the moment of order zero of $W$. Then
 $$\mathcal C_\RR=\{T: TM_0=M_0T^*, \,TP_n=P_nT \text{ for all } n\in\mathbb{N}_0\}.$$
\end{thm}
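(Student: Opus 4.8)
The plan is to route everything through the moments $M_k=\int_a^b x^kW(x)\,dx$ of $W$. The first step is to record the elementary bridge that $T\in\mathcal C_\RR$ if and only if $TM_k=M_kT^*$ for every $k\in\mathbb{N}_0$. One implication is trivial, since $TW=WT^*$ a.e.\ gives $\int_a^b x^k(TW(x)-W(x)T^*)\,dx=0$. For the converse I would apply Lemma \ref{moments} entrywise to the integrable matrix-valued function $x\mapsto TW(x)-W(x)T^*$: if all of its moments vanish, then each scalar entry has vanishing moments, hence is zero a.e., so the function itself vanishes a.e.\ and $T\in\mathcal C_\RR$. This reduces the theorem to the equivalence of the conditions ``$TM_k=M_kT^*$ for all $k$'' and ``$TM_0=M_0T^*$ together with $TP_n=P_nT$ for all $n$''. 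Throughout I use the Hermitian form $(P,Q)=\int_a^b P(x)W(x)Q(x)^*\,dx$, which is nondegenerate (indeed $(P,P)=0$ forces $P=0$, since $W$ is positive definite a.e.\ and a nonzero polynomial cannot vanish on a set of positive measure), for which multiplication by $x$ is self-adjoint and $(TP,Q)=T(P,Q)$.

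For the inclusion $\subseteq$, assume $TW=WT^*$ a.e. Then $TM_0=M_0T^*$ is just the case $k=0$. To obtain $TP_n=P_nT$, I would check that both $TP_n$ and $P_nT$ are orthogonal to each monomial $x^mI$ with $m<n$. For the first this is immediate from $(TP_n,x^mI)=T(P_n,x^mI)=0$; for the second one pushes $T$ through the weight, $(P_nT,x^mI)=\int_a^b P_nTWx^m\,dx=\int_a^b P_nWx^mT^*\,dx=(P_n,x^mI)T^*=0$, and here the hypothesis $TW=WT^*$ (rather than $TW=WT$) is exactly what is needed. Since $P_n$ is monic, $TP_n$ and $P_nT$ share the leading coefficient $T$, so $D_n:=TP_n-P_nT$ has degree at most $n-1$. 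Writing $D_n=\sum_{m<n}C_mx^m$ one gets $(D_n,D_n)=\sum_{m<n}(D_n,x^mI)C_m^*=0$, and nondegeneracy of the form yields $D_n=0$, i.e.\ $TP_n=P_nT$.

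For the reverse inclusion $\supseteq$, assume $TM_0=M_0T^*$ and $TP_n=P_nT$ for all $n$, and I would bootstrap from $M_0$ to all $M_k$. Expand $x^kI$ in the monic basis with left coefficients, $x^kI=\sum_{n=0}^{k}L_{n,k}P_n(x)$; this expansion exists and is unique because each $P_n$ is monic. Orthogonality collapses $M_k=(x^kI,P_0)$ to $M_k=L_{0,k}M_0$ (recall $P_0=I$ and $M_0=(P_0,P_0)$). Multiplying the expansion on the left by $T$ and using $TP_n=P_nT$ shows that $Tx^kI$ has the two left-expansions $\sum_n TL_{n,k}P_n$ and $\sum_n L_{n,k}TP_n$, so uniqueness gives $TL_{n,k}=L_{n,k}T$ for every $n$. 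Hence $TM_k=TL_{0,k}M_0=L_{0,k}TM_0=L_{0,k}M_0T^*=M_kT^*$, completing the equivalence and the proof.

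The manipulations of $(\cdot,\cdot)$ are routine; the two places that demand care are the consistent bookkeeping of left/right multiplication against the adjoint $*$, and the two appeals to nondegeneracy—once to force $D_n=0$ in the forward direction, and once (in the guise of uniqueness of the monic-basis expansion) in the reverse. The genuine crux is the self-orthogonality argument $(D_n,D_n)=0\Rightarrow D_n=0$, which is what converts commutation with the weight into commutation with each $P_n$.
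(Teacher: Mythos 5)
Your proof is correct, and while it follows the paper's overall route --- reducing everything to the moments $M_k$ and invoking Lemma \ref{moments} on the finite interval --- both halves are executed by genuinely different devices. For $\mathcal C_\RR\subseteq\{T: TM_0=M_0T^*,\, TP_n=P_nT\}$ the paper argues by induction on $n$: assuming $TP_j=P_jT$ for $j<n$ it computes $(P_nT,P_j)=0$, concludes $P_nT=AP_n$ for a constant matrix $A$, and matches leading coefficients to get $A=T$; you avoid the induction entirely by testing $TP_n$ and $P_nT$ against the monomials $x^mI$, $m<n$, and killing the difference $D_n=TP_n-P_nT$ via the self-orthogonality computation $(D_n,D_n)=0$ together with nondegeneracy of the form. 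For the reverse inclusion the paper again inducts, this time on the moments: writing $P_n=x^nI+x^{n-1}A_{n-1}+\cdots+A_0$, reading off $TA_j=A_jT$ from $TP_n=P_nT$, and extracting $M_n=-(A_{n-1}M_{n-1}+\cdots+A_0M_0)$ from $(P_n,P_0)=0$, so that $TM_n=M_nT^*$ follows from the inductive hypothesis; you instead expand $x^kI=\sum_{n\le k}L_{n,k}P_n$, collapse $M_k=(x^kI,P_0)$ to $L_{0,k}M_0$, obtain $TL_{0,k}=L_{0,k}T$ from uniqueness of the left-coefficient expansion, and get $TM_k=M_kT^*$ in one stroke. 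The trade-off is clear: the paper's two inductions are entirely elementary and use only the orthogonality relations among the $P_n$, while your induction-free version leans on two extra structural facts --- nondegeneracy $(P,P)=0\Rightarrow P=0$ (which you correctly justify from a.e.\ positive definiteness and the fact that a nonzero polynomial cannot vanish on a set of positive measure) and uniqueness of expansions in the monic basis --- both of which are legitimately available in this setting. You also make explicit the entrywise application of Lemma \ref{moments} to the matrix function $TW-WT^*$, a step the paper leaves tacit.
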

\begin{proof} If $T\in\mathcal C_\RR$, then $TW(x)=W(x)T^*$ for a.e. $x\in(a,b)$. Since $P_0=I$ it is obvious that $TP_0=P_0T$.
 Now assume that $n\ge1$ and that $TP_j(x)=P_j(x)T$ for all $x$ and all $0\le j\le n-1$. Then
 \begin{equation}
 \begin{split}
 (P_nT,P_j)&=\int_a^b P_n(x)TW(x)P_j^*(x)dx= \int_a^b P_n(x)W(x)T^*P_j^*(x)dx\\
 &=(P_n,P_jT)=(P_n,TP_j)=(P_n,P_j)T^*=0.
 \end{split}
 \end{equation}
 Hence $P_n(x)T=AP_n(x)$ for some $A\in\Mat_N(\CC)$ and all $x$. Since $P_n$ is monic it follows that $T=A$. Therefore by induction on $n$ we have proved that $P_nT=TP_n$ for  all $n\in\mathbb{N}_0$. Moreover, it is clear that $TM_0=M_0T^*$. Therefore
$$\mathcal C_\RR\subseteq\{T: TM_0=M_0T^*, \,TP_n=P_nT \text{ for all } n\in\mathbb{N}_0\}.$$

 To prove the reverse inclusion we start with a matrix $T$ such that  $TM_0=M_0T^*$ and
 $TP_n=P_nT$ for all  $n\in\mathbb{N}_0$. We will first prove by induction on $n\in\mathbb{N}_0$ that  $TM_n=M_nT^*$. Thus assume that $n\ge1$ and that $TM_j=M_jT^*$ for all $0\le j\le n-1$.  If we write $P_n=x^nI+x^{n-1}A_{n-1}+\cdots+A_0$, then by hypothesis $TA_j=A_jT$ for all $0\le j\le n-1$.

 Now $(P_n,P_0)=0$ is equivalent to $M_n+A_{n-1}M_{n-1}+\cdots +A_0M_0=0$.
 Hence $TM_n=M_nT^*$. Therefore
 $$\int_a^b x^n(TW(x)-W(x)T^*)\,dx=0\quad \text{for all }\quad n\in\mathbb{N}_0,$$
 which is equivalent to $TW(x)=W(x)T^*$ for almost every $x\in(a,b)$, see Lemma \ref{moments}. Thus
$$\{T: TM_0=M_0T^*, \,TP_n=P_nT \text{ for all } n\in\mathbb{N}_0\}\subseteq \mathcal C_\RR,$$
completing the proof of the theorem.\qed
\end{proof}

\

\begin{thm}\label{1}
If a matrix weight $W$ supported on $(a,b)$ reduces, then $\mathcal D(W)$ contains non-scalar operators of order zero. If $a,b\in\mathbb R$, then the converse is also true.
\end{thm}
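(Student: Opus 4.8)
The plan is to identify the order-zero part of $\mathcal D(W)$ with the commutant of the monic orthogonal polynomials, and then to run both implications through Theorem \ref{criterio1} and Corollary \ref{Schur}. First I would record that an order-zero operator $D\in\mathcal D(W)$ is necessarily a \emph{constant} matrix $F_0$: from $Q_0D=\Gamma_0(D)Q_0$ with $Q_0$ a nonzero constant matrix, the coefficient cannot depend on $x$. Writing the eigenvalue equation for the monic polynomials as $P_nF_0=\Gamma_n(F_0)P_n$ and comparing leading coefficients (recall $P_n$ is monic) forces $\Gamma_n(F_0)=F_0$, hence $F_0P_n=P_nF_0$ for all $n$, and equivalently $F_0A_{n,j}=A_{n,j}F_0$ for the coefficients $A_{n,j}$ of $P_n$. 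Thus the order-zero part of $\mathcal D(W)$ is exactly the unital algebra $\mathcal A=\{T\in\Mat_N(\CC):TP_n=P_nT\ \forall n\}$, which contains $\mathcal C_\RR$.

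For the direct implication, suppose $W$ reduces. By Corollary \ref{Schur} there is a non-scalar $T\in\mathcal C_\RR$, i.e.\ $TW(x)=W(x)T^*$ a.e. The forward inclusion in Theorem \ref{criterio1} --- which follows directly from $TW=WT^*$ and the orthogonality relations via the computation $(P_nT,P_j)=(P_n,P_jT)=(P_n,TP_j)=(P_n,P_j)T^*=0$ for $j<n$, and does \emph{not} use finiteness of $(a,b)$ --- gives $TP_n=P_nT$ for all $n$. Hence $T\in\mathcal A$ is a non-scalar operator of order zero in $\mathcal D(W)$, settling the first assertion for an arbitrary interval.

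For the converse I assume $a,b\in\RR$ and start from a non-scalar $F_0\in\mathcal A$. The crucial step is a lemma: $F_0$ commutes with $M_kM_0^{-1}$ for every $k\ge0$. I would prove this by induction on $k$, the base case $M_0M_0^{-1}=I$ being trivial; for $n\ge1$ the relation $\int_a^bP_n(x)W(x)\,dx=0$ reads $M_n=-\sum_{j<n}A_{n,j}M_j$, so $M_nM_0^{-1}=-\sum_{j<n}A_{n,j}\bigl(M_jM_0^{-1}\bigr)$, and since $F_0$ commutes with each $A_{n,j}$ and (inductively) with each $M_jM_0^{-1}$, it commutes with $M_nM_0^{-1}$. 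Consequently $F_0M_k=M_k\bigl(M_0^{-1}F_0M_0\bigr)$ for all $k$, and Lemma \ref{moments}, applied entrywise to the matrix function $F_0W-W(M_0^{-1}F_0M_0)$, upgrades this equality of all moments to $F_0W(x)=W(x)\,M_0^{-1}F_0M_0$ for a.e.\ $x$. It is precisely here that finiteness of the interval is used.

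Writing $G=M_0^{-1}F_0M_0$, so that $F_0W=WG$ and, taking adjoints with $M_0=M_0^*$, also $M_0F_0^*M_0^{-1}W=WF_0^*$, I would then verify that $T=F_0+M_0F_0^*M_0^{-1}$ and $T'=i\bigl(F_0-M_0F_0^*M_0^{-1}\bigr)$ both satisfy $SW=WS^*$, hence lie in $\mathcal C_\RR$; equivalently this shows that the conjugate-linear involution $T\mapsto M_0T^*M_0^{-1}$ preserves $\mathcal A$, so that $\mathcal C_\RR$ is a real form of $\mathcal A$. Since $F_0=\tfrac12(T-iT')$ is non-scalar, at least one of $T,T'$ is non-scalar, whence $\mathcal C_\RR\supsetneq\RR I$ and $W$ reduces by Corollary \ref{Schur}. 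The main obstacle is exactly this lemma: a matrix commuting with all $P_n$ need not satisfy $SW=WS^*$, so manufacturing genuine elements of $\mathcal C_\RR$ from it requires controlling its interaction with the moments $M_k$ --- which is where both the monic recursion $M_n=-\sum_{j<n}A_{n,j}M_j$ and the finite-interval hypothesis (through Lemma \ref{moments}) are indispensable.
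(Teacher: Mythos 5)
Your proposal is correct, and while the overall skeleton (reduce everything to producing a non-scalar element of $\mathcal C_\RR$, with Lemma \ref{moments} carrying the finite-interval hypothesis) matches the paper, the key mechanisms are genuinely different. For the forward implication the paper is constructive: writing $W=M(W_1\oplus W_2)M^*$, it exhibits the idempotent $MPM^{-1}$ ($P$ the projection onto the first block) as an order-zero element of $\mathcal D(W)$; you instead route through Corollary \ref{Schur} and the forward inclusion of Theorem \ref{criterio1}, and you were right to flag that this inclusion is proved without finiteness of $(a,b)$ --- citing the theorem as stated (finite interval only) would have been a gap, since the first assertion concerns arbitrary support. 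For the converse the paper leans on two external inputs from \cite{GT07}: the real-form decomposition \eqref{real} and the adjoint formula $\widetilde T=\|P_0\|^2T^*\|P_0\|^{-2}$, so that a symmetric order-zero $T$ satisfies $TM_0=M_0T^*$ and $TP_n=P_nT$, after which Theorem \ref{criterio1} is quoted as a black box. You replace this machinery by a self-contained argument: your moment-commutation lemma $F_0M_k=M_k\bigl(M_0^{-1}F_0M_0\bigr)$ re-derives, and slightly strengthens, the reverse inclusion of Theorem \ref{criterio1} (you never assume $TM_0=M_0T^*$), and your matrices $T=F_0+M_0F_0^*M_0^{-1}$ and $T'=i\bigl(F_0-M_0F_0^*M_0^{-1}\bigr)$ are, up to real scalars, exactly the symmetric and antisymmetric parts of $F_0$ under the involution $T\mapsto M_0T^*M_0^{-1}$, verified to lie in $\mathcal C_\RR$ by direct computation rather than via symmetry of differential operators. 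What the paper's route buys is brevity, given results already on record; what yours buys is independence from \eqref{real} and the adjoint formula, plus a sharper structural byproduct: on a finite interval the full commutant of $\{P_n\}_{n\in\NN_0}$ satisfies $F_0W=W\,M_0^{-1}F_0M_0$, so $\mathcal C_\RR$ is a real form of that commutant --- an unnormalized version of Proposition \ref{comm} --- and your explicit decomposition $F_0=\tfrac12(T-iT')$ makes transparent a point left implicit in the paper, namely that the symmetric parts of an order-zero operator are again of order zero.
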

\begin{proof}
Assume that $W$ reduces, then there exists an inversible matrix $M$ such that
\[
W(x)=
M\left(\begin{matrix}
W_1(x) & {\bf0} \\
\bf0 &W_2(x)
\end{matrix}\right)M^*,\qquad \text{for every }x\in(a,b),
\]
with $W_1,W_2$ weights of smaller size. Then, it is clear that \[M\left(\begin{matrix}
\bf1 & {\bf0} \\
\bf0 &\bf0
\end{matrix}\right)M^{-1}
\]
is an operator in $\mathcal D(W)$.
For the second statement, assume that $a,b\in\mathbb R$ and that there is a non-scalar order zero operator $D\in\mathcal D(W)$.
By \eqref{real}, we have that $D=T+iT'$, with $T,T'\in\mathcal D(W)$ symmetric differential operators. Since $D$ is non-scalar then $T$ or $T'$ is non-scalar.
Assume that $T$ is non-scalar and let $\{P_n\}_{n\in\mathbb{N}_0}$ be the sequence of monic orthogonal polynomials with respect to $W$. Clearly, $TP_n=P_nT$ for $n\in\mathbb N$.
Since the adjoint of $T$ is given by $\widetilde T=\,\|P_0\|^2T^*\,\|P_0\|^{-2}$ (\cite[Theorem 4.3]{GT07}) we have that $T\in\mathcal C_\mathbb{R}$, see Theorem \ref{criterio1}. If $T$ is scalar, then $T'$ is non-scalar. Analogously, it can be seen that $T'\in\mathcal C_\mathbb{R}$. Hence, $\mathcal C_\mathbb{R}$ is non-trivial. Therefore, by Theorem \ref{reducibility1}, $W$ reduces.\qed
\end{proof}
\begin{cor}\label{cor}
 If a matrix weight $W$ on $(a,b)$ reduces, then the commutant of the monic orthogonal polynomials $\{P_n\}_{n\in\mathbb N_0}$, given by
 $$\{T\in\Mat_N(\CC):TP_n(x)=P_n(x)T, \text{ for all }\; n, x\},$$  contains non-scalar matrices. If $a,b\in\mathbb R$, then the converse is also true.
\end{cor}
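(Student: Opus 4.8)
The plan is to obtain this statement as a direct corollary of Theorem \ref{1}, after translating the property ``$\mathcal D(W)$ contains a non-scalar operator of order zero'' into the language of the commutant of the monic orthogonal polynomials. The bridge between the two formulations is the observation that a constant matrix $T$, regarded as a zero-order right-hand-side operator acting by $P_n\mapsto P_nT$, belongs to $\mathcal D(W)$ if and only if $T$ lies in the commutant $\{T\in\Mat_N(\CC):TP_n(x)=P_n(x)T \text{ for all } n,x\}$.

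First I would establish this bridge. By the definition \eqref{D} of $\mathcal D(W)$, evaluated on the monic sequence $\{P_n\}$, a zero-order operator $T$ lies in $\mathcal D(W)$ precisely when $P_nT=\Gamma_n(T)P_n$ for every $n\in\mathbb N_0$. Since each $P_n$ is monic of degree $n$, comparing the degree-$n$ coefficients of both sides forces $\Gamma_n(T)=T$, and hence $P_nT=TP_n$ for all $n$; conversely, if $T$ commutes with every $P_n$ one simply takes $\Gamma_n(T)=T$. Thus the zero-order operators in $\mathcal D(W)$ are exactly the elements of the commutant of $\{P_n\}$, so $\mathcal D(W)$ contains a non-scalar zero-order operator if and only if this commutant contains a non-scalar matrix.

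With the bridge in hand, both assertions follow from Theorem \ref{1}. For the first statement, if $W$ reduces then Theorem \ref{1} furnishes a non-scalar zero-order operator in $\mathcal D(W)$, which by the bridge is a non-scalar matrix commuting with all $P_n$; note that this implication needs no finiteness of the interval. For the converse, when $a,b\in\RR$, a non-scalar element of the commutant is a non-scalar zero-order operator in $\mathcal D(W)$, and the converse half of Theorem \ref{1} then gives that $W$ reduces.

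I do not expect a genuine obstacle, since the corollary is essentially a restatement of Theorem \ref{1}. The only point demanding care is the leading-coefficient comparison identifying zero-order operators with elements of the commutant: one must invoke that the $P_n$ are \emph{monic}, so that $\Gamma_n(T)=T$ is forced, rather than work with an arbitrary orthogonal sequence, for which the eigenvalue $\Gamma_n(T)$ would not in general coincide with $T$. The adjoint condition entering $\mathcal C_\RR$ through Theorem \ref{criterio1} never has to be verified by hand here, as it is absorbed into the symmetric decomposition \eqref{real} already exploited inside the proof of Theorem \ref{1}.
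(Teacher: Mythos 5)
Your proposal is correct and follows the paper's own route: the paper states this result as an immediate consequence of Theorem \ref{1}, with the identification of order-zero operators in $\mathcal D(W)$ with matrices commuting with the monic $P_n$ (your ``bridge,'' forced by comparing leading coefficients, together with $P_0=I$ ensuring constant coefficients) appearing implicitly in the proof of Theorem \ref{1} itself. Your explicit justification that $\Gamma_n(T)=T$ for the monic sequence is exactly the right point to make precise, and no gap remains.
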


\begin{thm}\label{ab}
Let $W$ be a matrix weight  on $(a,b)$ and
$$
 A_n P_{n-1}(x)+B_n P_n(x)+P_{n+1}(x)=x P_n(x)
$$
be  the three term recursion relation
satisfied by the corresponding sequence of monic orthogonal polynomials $\{P_n\}_{n\in\mathbb{N}_0}$. If $W$ reduces, then there is a non-scalar matrix commuting with $\{A_n,B_n\}_{n\in\mathbb{N}_0}$. Even more, if $a,b\in\mathbb R$, then the converse is also true.
\end{thm}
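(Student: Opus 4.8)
The plan is to reduce everything to Corollary \ref{cor} by showing that the commutant of the polynomials and the commutant of the recursion coefficients are literally the same subalgebra of $\Mat_N(\CC)$. Write
$$\Kkr=\{T\in\Mat_N(\CC):TP_n(x)=P_n(x)T\ \text{for all }n\in\NN_0\text{ and all }x\}$$
for the commutant of the monic orthogonal polynomials, and let $\Kkr'$ denote the set of $T\in\Mat_N(\CC)$ such that $TA_n=A_nT$ and $TB_n=B_nT$ for all $n\in\NN_0$. I claim $\Kkr=\Kkr'$. Granting this, the theorem follows at once: if $W$ reduces then $\Kkr$ contains a non-scalar matrix by Corollary \ref{cor}, hence so does $\Kkr'$; conversely, if $a,b\in\RR$ and $\Kkr'$ contains a non-scalar matrix, then $\Kkr$ does too, and the converse half of Corollary \ref{cor}---which is the only place where finiteness of the interval enters---yields that $W$ reduces.

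To see $\Kkr\subseteq\Kkr'$, take $T\in\Kkr$ and use the recursion in the form $xP_n=P_{n+1}+B_nP_n+A_nP_{n-1}$. Multiplying on the left by $T$ and using $TP_k=P_kT$ (and $Tx=xT$) gives $xP_nT=P_{n+1}T+TB_nP_n+TA_nP_{n-1}$, while multiplying on the right by $T$ gives $xP_nT=P_{n+1}T+B_nP_nT+A_nP_{n-1}T$. Subtracting these two identities, and using $P_kT=TP_k$ once more to rewrite $B_nP_nT=B_nTP_n$ and $A_nP_{n-1}T=A_nTP_{n-1}$, we obtain
$$0=(TB_n-B_nT)P_n(x)+(TA_n-A_nT)P_{n-1}(x)\qquad\text{for all }x.$$
Since $P_n$ is monic of degree $n$ and $P_{n-1}$ is monic of degree $n-1$, comparing the coefficient of $x^n$ forces $TB_n-B_nT=0$, and then comparing the coefficient of $x^{n-1}$ forces $TA_n-A_nT=0$. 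Hence $T\in\Kkr'$.

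For the reverse inclusion $\Kkr'\subseteq\Kkr$ I would argue by induction on $n$, solving the recursion for the top polynomial as $P_{n+1}=xP_n-B_nP_n-A_nP_{n-1}$. The base cases are $P_0=I$ and $P_1=xI-B_0$, both of which commute with any $T\in\Kkr'$. Assuming $TP_n=P_nT$ and $TP_{n-1}=P_{n-1}T$, one computes $TP_{n+1}=xP_nT-B_nP_nT-A_nP_{n-1}T=P_{n+1}T$, moving $T$ past $x$, $A_n$ and $B_n$ because $T\in\Kkr'$, and past $P_n,P_{n-1}$ by the inductive hypothesis. This closes the induction and gives $\Kkr'\subseteq\Kkr$.

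The whole argument is elementary once Corollary \ref{cor} is in hand; the only step demanding genuine care is the degree bookkeeping in the forward inclusion, where monicity of the $P_n$ is essential to peel off the two commutators one order at a time. I do not expect a serious obstacle beyond this, since the substantive content---the equivalence between reducibility of $W$ and the presence of a non-scalar matrix in the polynomial commutant, together with the need for a bounded interval in the converse direction---is already packaged in Corollary \ref{cor}.
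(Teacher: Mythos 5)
Your proof is correct and takes essentially the same route as the paper: both directions reduce to Corollary \ref{cor}, with $T$ pushed through the three term recursion on each side to identify the commutant of $\{P_n\}$ with that of $\{A_n,B_n\}$ (your explicit coefficient comparison in degrees $x^n$ and $x^{n-1}$ just makes precise what the paper's ``which implies'' leaves implicit), and the converse handled by the same induction on $n$ starting from $P_0=I$.
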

\begin{proof}
If $W$ reduces then, by Corollary \ref{cor}, there is non-scalar matrix $T$  commuting with $P_n$ for every $n$, then we have
 $$
 A_n T P_{n-1}(x)+B_nTP_n(x)+TP_{n+1}(x)=TA_nP_{n-1}(x)+TB_nP_n(x)+TP_{n+1}(x),
$$
which implies that, for every $n$, $A_nT=TA_n$ and $B_nT=TB_n$.

 Assume $a,b\in\mathbb R$ and let   $T$ be a matrix commuting with $A_n$ and $B_n$ for all $n\in\mathbb{N}_0$. Then, we have
 $$A_nP_{n-1}(x)T+B_nP_n(x)T+P_{n+1}(x)T=xP_n(x)T,$$
 $$A_nTP_{n-1}(x)+B_nTP_n(x)+TP_{n+1}(x)=x TP_n(x),$$
for every $n$. Since $P_0(x)T=TP_0(x)$, by induction on $n\ge0$ it is clear that $P_n(x)T=TP_n(x)$ for all $n\in\mathbb{N}_0$.
By  Corollary \ref{cor} the proof is finished.\qed
\end{proof}

\begin{cor}
 Let $\{Q_n\}_{n\in\mathbb{N}_0}$ be a sequence of orthogonal polynomials with respect to a weight $W$ and  with respect to a weight $V$, both on a finite interval $(a,b)$. Then, $W$ is irreducible if and only if $V$ is irreducible.
\end{cor}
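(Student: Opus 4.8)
The plan is to show that $W$ and $V$ share the very same sequence of monic orthogonal polynomials, and then to invoke Corollary~\ref{cor}, which for a weight on a finite interval characterizes reducibility purely through the commutant of the monic orthogonal polynomials, with no further reference to the weight itself.

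First I would pass from the given sequence $\{Q_n\}$ to the monic orthogonal polynomials of each weight. Let $E_n$ denote the (nonsingular) leading coefficient of $Q_n$, and observe that $E_n$ is an attribute of the polynomial $Q_n$ alone and does not depend on which weight is used to define orthogonality. The polynomial $E_n^{-1}Q_n$ is then monic of degree $n$. Because the $Q_m$ have nonsingular leading coefficients, every matrix polynomial of degree $<n$ lies in the right $\Mat_N(\CC)$-span of $Q_0,\dots,Q_{n-1}$; combined with the relations $(Q_n,Q_m)=0$ for $m<n$ this shows that $Q_n$ is orthogonal to every polynomial of strictly lower degree, and this holds simultaneously for the inner product attached to $W$ and the one attached to $V$. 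Hence $E_n^{-1}Q_n$ is, for each of the two weights, a monic polynomial of degree $n$ orthogonal to all lower degrees; by uniqueness of the monic orthogonal polynomials it must coincide with $P_n^{W}$ and with $P_n^{V}$, so that $P_n^{W}=P_n^{V}=E_n^{-1}Q_n$ for every $n$. I set $P_n:=P_n^{W}=P_n^{V}$.

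With this identification the conclusion is immediate. The commutant $\{T\in\Mat_N(\CC): TP_n(x)=P_n(x)T \text{ for all } n,x\}$ that appears in Corollary~\ref{cor} depends only on the common sequence $\{P_n\}$, and is therefore literally the same object whether we think of it as attached to $W$ or to $V$. Since the interval $(a,b)$ is finite, both directions of Corollary~\ref{cor} are available: $W$ reduces if and only if this commutant contains a non-scalar matrix, and in exactly the same way $V$ reduces if and only if this same commutant contains a non-scalar matrix. Therefore $W$ reduces precisely when $V$ reduces, which is to say $W$ is irreducible if and only if $V$ is irreducible.

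The only step demanding genuine care is the first one, the identification $P_n^{W}=P_n^{V}$; everything afterward is a direct appeal to Corollary~\ref{cor}. The argument also makes transparent why the finiteness of $(a,b)$ is needed: it is exactly the hypothesis under which the converse half of Corollary~\ref{cor} (equivalently, of Theorem~\ref{1}) is valid, so that irreducibility may be detected from the monic orthogonal polynomials alone. Without finiteness one direction could fail, which is consistent with the reducible-versus-irreducible phenomenon for shared polynomial sequences on unbounded support alluded to earlier in the paper.
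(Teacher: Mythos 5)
Your proposal is correct and matches the paper's intended argument: the paper states this corollary without proof immediately after Theorem~\ref{ab}, the idea being exactly that a shared sequence of orthogonal polynomials forces $P_n^W=P_n^V$ (hence the same commutant, equivalently the same recursion coefficients $A_n,B_n$), so the finite-interval criterion of Corollary~\ref{cor} (or Theorem~\ref{ab}) applies identically to both weights. Your careful verification that $E_n^{-1}Q_n$ is the common monic sequence, and your remark on where finiteness of $(a,b)$ enters, are precisely the content the paper leaves implicit.
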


We feel that it is time to introduce the following definition.
\begin{defn}\label{def}
Given a sequence of matrix polynomials $\{Q_n\}_{n\in\NN_0}$ we say that it is reducible if it is equivalent to a sequence $\{\tilde Q_n\}_{n\in\NN_0}$
(i.e. $\tilde Q_n(x)=MQ_n(x)M^{-1}$, for all $x, n$ and some non-singular constant matrix $M$)
such that
$$\tilde Q_n(x)=\left(\begin{matrix}\tilde  Q_n^1(x) & \bf0\\ \bf0&\tilde  Q_n^2(x)\end{matrix}\right),\quad \text{ for all } n\in\NN_0.$$
\end{defn}
For more general definitions of similarity and equivalence of matrix polynomials the reader can see \cite{FN14}.

\begin{lem}\label{Favard} If $\{Q_n\}_{n\in\NN_0}$ is a sequence of matrix orthogonal polynomials equivalent to $\{\tilde Q_n=\tilde Q_n^1\oplus\tilde Q_n^2\}_{n\in\NN_0}$, then $\{\tilde Q_n^1\}_{n\in\NN_0}$ and $\{\tilde Q_n^2\}_{n\in\NN_0}$ are sequences of matrix orthogonal polynomials.
\end{lem}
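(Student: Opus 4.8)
The plan is to recover each diagonal block as a genuine orthogonal polynomial sequence by transporting the orthogonality of $\{Q_n\}$ along the equivalence and then restricting to the diagonal blocks of the transported weight. First I would fix a weight $W$ on $(a,b)$ for which $\{Q_n\}$ is orthogonal, so that $(Q_n,Q_m)=\int_a^b Q_n(x)\,W(x)\,Q_m(x)^*\,dx=\delta_{nm}\|Q_n\|^2$ with $\|Q_n\|^2$ positive definite. Writing $\tilde Q_n(x)=MQ_n(x)M^{-1}$ and setting $\tilde W=MWM^*$, a one-line computation gives
$$\int_a^b \tilde Q_n\,\tilde W\,\tilde Q_m^*\,dx = M\Big(\int_a^b Q_n\,W\,Q_m^*\,dx\Big)M^* = \delta_{nm}\,M\|Q_n\|^2M^*,$$
so $\{\tilde Q_n\}$ is a sequence of matrix orthogonal polynomials with respect to $\tilde W$ (cf. the Remark in Section \ref{alg}). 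Note that $\tilde W$ is again a bona fide weight, being a conjugate of $W$ by the nonsingular matrix $M$.

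Next I would decompose $\tilde W$ into blocks conformally with $\tilde Q_n=\tilde Q_n^1\oplus\tilde Q_n^2$, writing $\tilde W=\begin{pmatrix}\tilde W_{11}&\tilde W_{12}\\\tilde W_{21}&\tilde W_{22}\end{pmatrix}$. Because each $\tilde Q_n$ is block diagonal, the $(i,i)$ block of $\tilde Q_n\,\tilde W\,\tilde Q_m^*$ equals $\tilde Q_n^i\,\tilde W_{ii}\,(\tilde Q_m^i)^*$ and involves only the diagonal block $\tilde W_{ii}$, the off-diagonal parts of $\tilde W$ contributing solely to off-diagonal blocks. Reading off the $(i,i)$ block of the displayed identity then yields $\int_a^b \tilde Q_n^i\,\tilde W_{ii}\,(\tilde Q_m^i)^*\,dx=\delta_{nm}\,(M\|Q_n\|^2M^*)_{ii}$ for $i=1,2$, where $(\,\cdot\,)_{ii}$ denotes the $(i,i)$ block.

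Finally I would check that each $\tilde W_{ii}$ is itself a weight and that the identity above exhibits $\{\tilde Q_n^i\}$ as an orthogonal sequence. The diagonal blocks of the positive definite matrix $\tilde W(x)$ are positive definite for a.e. $x$ and inherit finite moments from $\tilde W$, so $\tilde W_{ii}$ is a weight; likewise $(M\|Q_n\|^2M^*)_{ii}$, being a principal block of a positive definite matrix, is nonsingular. The degree condition also passes to the blocks: the leading coefficient of $\tilde Q_n$ is $MK_nM^{-1}$, with $K_n$ the nonsingular leading coefficient of $Q_n$, and a block-diagonal nonsingular matrix has nonsingular diagonal blocks, so each $\tilde Q_n^i$ has degree exactly $n$ with nonsingular leading coefficient. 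Hence $\{\tilde Q_n^1\}$ and $\{\tilde Q_n^2\}$ are sequences of matrix orthogonal polynomials, with respect to $\tilde W_{11}$ and $\tilde W_{22}$ respectively.

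The step I expect to require the most care is the second one: a priori $\tilde W$ is \emph{not} block diagonal --- only the polynomials are --- so one cannot simply split $\tilde W$ into sub-weights. The crux is that the block structure of the $\tilde Q_n$ decouples the diagonal blocks of the Gram matrices from the off-diagonal entries of $\tilde W$, after which the elementary fact that principal submatrices of positive definite matrices are positive definite supplies non-degeneracy. An alternative route, matching the name of the lemma, is to observe that the three-term recursion coefficients of $\{\tilde Q_n\}$ are forced to be block diagonal by their uniqueness, restrict the recursion to each block, and invoke Favard's theorem; the weight-restriction argument above is more direct and avoids the positivity bookkeeping that the matrix Favard theorem demands.
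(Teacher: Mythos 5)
Your proof is correct and follows essentially the same route as the paper's: transport the orthogonality to $\tilde W=MWM^*$, then read off the diagonal blocks of the Gram matrices, which involve only $\tilde W_{11}$ and $\tilde W_{22}$ because the $\tilde Q_n$ are block diagonal. Your additional bookkeeping (positive definiteness of the principal blocks, finite moments, nonsingular block leading coefficients) merely makes explicit what the paper dismisses with ``it is clear that $\tilde W_{1,1}$ and $\tilde W_{2,2}$ are also matrix weights.''
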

\begin{proof}Let $W$ be the orthogonality weight function of $\{Q_n\}_{n\in\NN_0}$. Let $M$ be a matrix such that $\tilde Q_n=MQ_nM^{-1}$, for every $n\in \NN_0$. Thus, the  sequence  $\{\tilde Q_n\}_{n\in\NN_0}$ is orthogonal with respect to $\tilde W=MWM^*$. Let
$$\tilde W=\left(\begin{matrix} \tilde W_{1,1} & \tilde W_{1,2}\\ \tilde W_{2,1} & \tilde W_{2,2}\end{matrix}\right),$$
it is clear that $\tilde W_{1,1}$ and $\tilde W_{2,2}$ are also matrix weights.

A direct calculation gives
$${\bf0}=\langle \tilde Q_m,\tilde Q_n\rangle=\left(\begin{matrix}\langle\tilde Q_m^1,\tilde Q_n^1\rangle_{\tilde W_{1,1}} & \langle\tilde Q_m^1,\tilde Q_n^2\rangle_{\tilde W_{1,2}}\\ \langle\tilde Q_m^2,\tilde Q_n^1\rangle_{\tilde W_{2,1}} & \langle\tilde Q_m^2,\tilde Q_n^2\rangle_{\tilde W_{2,2}}\end{matrix}\right).$$
Therefore $\{\tilde Q_n^1\}_{n\in\NN_0}$ and $\{\tilde Q_n^2\}_{n\in\NN_0}$
are, respectively, orthogonal sequences with respect to the weight functions  $\tilde W_{1,1}$ and $\tilde W_{2,2}$. \qed
\end{proof}

\begin{thm}
 A sequence of monic orthogonal polynomials $\{P_n\}_{n\in\mathbb{N}_0}$ with respect to a weight $W$ is reducible if and only if
its commutant, given by
 $$\{T\in\Mat_N(\CC):TP_n(x)=P_n(x)T, \text{ for all }\; n, x\},$$ contains non-scalar matrices.
\end{thm}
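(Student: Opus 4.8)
The plan is to prove both implications, with the real content in the converse. For the direct implication, suppose $\{P_n\}_{n\in\NN_0}$ is reducible, so that $\tilde P_n = MP_nM^{-1} = \tilde P_n^1\oplus\tilde P_n^2$ is block diagonal for all $n$, both blocks of positive size. Let $E = I_k\oplus 0$ be the idempotent projecting onto the first block. Since every $\tilde P_n$ is block diagonal, $E\tilde P_n = \tilde P_n E$, and hence $T := M^{-1}EM$ satisfies $TP_n = P_nT$ for all $n$. As $E$ is a nontrivial idempotent, $T$ is non-scalar, so the commutant contains a non-scalar matrix.

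For the converse I first observe that both the hypothesis and the conclusion are invariant under equivalence $P_n\mapsto MP_nM^{-1}$: the commutant is merely conjugated by $M$, so being non-scalar is preserved, while reducibility is preserved by transitivity of equivalence. Passing to the equivalent weight $M_0^{-1/2}WM_0^{-1/2}$, I may therefore assume from the start that $M_0 = I$. Write $\mathcal B = \{T : TP_n = P_nT \text{ for all } n\}$ for the commutant, a unital subalgebra of $\Mat_N(\CC)$. The heart of the argument is the claim that $\mathcal B$ is closed under taking adjoints.

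To establish this, let $H_n = (P_n,P_n) = \int P_n(x)W(x)P_n(x)^*\,dx$, which are positive definite and self-adjoint, with $H_0 = M_0 = I$ (these are finite since $W$ has finite moments of all orders). From the three term recursion one gets, exactly as in the proof of Theorem \ref{ab}, that any $T\in\mathcal B$ commutes with all the coefficients $A_n$ and $B_n$; and pairing the recursion against $P_{n-1}$ and $P_n$ yields the standard identities $A_n = H_nH_{n-1}^{-1}$ and $B_nH_n = H_nB_n^*$. An induction using $H_n = A_nH_{n-1}$ then shows $T$ commutes with every $H_n$, and taking adjoints of $TH_n = H_nT$ together with $H_n = H_n^*$ shows that $T^*$ also commutes with every $H_n$. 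Feeding this into the adjoints of $TA_n = A_nT$ and $TB_n = B_nT$, and using $A_n = H_nH_{n-1}^{-1}$ and $B_n^* = H_n^{-1}B_nH_n$, gives $T^*A_n = A_nT^*$ and $T^*B_n = B_nT^*$; the recursion then yields $T^*P_n = P_nT^*$ by induction, so $T^*\in\mathcal B$. This bookkeeping is exactly the main obstacle, the decisive structural input being that the $H_n$ are self-adjoint and commute with $T$.

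Granting that $\mathcal B$ is a $*$-subalgebra of $\Mat_N(\CC)$, the proof finishes quickly. If $\mathcal B\supsetneq\CC I$, pick a non-scalar $T\in\mathcal B$; one of its self-adjoint parts $(T+T^*)/2$, $(T-T^*)/(2i)$ is a non-scalar self-adjoint element $S\in\mathcal B$, and a spectral projection of $S$ (a Lagrange polynomial in $S$, hence in $\mathcal B$) is a nontrivial orthogonal projection $E = E^* = E^2\in\mathcal B$. Since $E$ commutes with all $P_n$ and is self-adjoint, the orthogonal decomposition $\CC^N = \im E\oplus\ker E$ consists of $P_n$-invariant subspaces, so a unitary $U$ adapted to it block-diagonalizes every $P_n$ simultaneously; thus $\{P_n\}$ is reducible. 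Equivalently, with $M_0 = I$ the self-adjoint part of $\mathcal B$ is exactly $\mathcal C_\RR$ by Theorem \ref{criterio1}, so $\mathcal C_\RR\supsetneq\RR I$ and Theorem \ref{reducibility1} shows $W$ reduces, whence its monic orthogonal polynomials block-diagonalize.
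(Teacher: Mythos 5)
Your proposal is correct, and although its converse consumes the same raw ingredients as the paper's --- the normalization $\|P_0\|^2=M_0=I$, the fact (from the proof of Theorem \ref{ab}) that any $T$ in the commutant also commutes with the recursion coefficients $A_n$, $B_n$, and the identities \eqref{uu} --- it is organized around a genuinely different key step. The paper never proves $*$-closure: it takes an eigenspace of the (possibly non-normal) $T$, passes to a unitarily adapted basis in which all of $P_n,A_n,B_n,S_n$ become block \emph{upper triangular}, kills the off-diagonal blocks $S_n^0$, $B_n^0$, $A_n^0$ using positive definiteness of $S_n$ and the Hermitian symmetry of $B_nS_n$, and finally runs an induction on the upper-right block of the recursion to get $P_n^0=0$. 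You instead establish the structural fact that the commutant $\mathcal B$ is a $*$-algebra (via $T^*$ commuting with the norms $H_n$, then with $A_n=H_nH_{n-1}^{-1}$ and $B_n^*=H_n^{-1}B_nH_n$, then with the $P_n$ by the recursion), extract a non-scalar self-adjoint element and a nontrivial spectral projection $E=E^*=E^2\in\mathcal B$, after which block-diagonalization is immediate from the invariance of the orthogonal decomposition $\im E\oplus\ker E$ --- no triangular intermediate stage and no final induction on $P_n^0$. Your route costs heavier adjoint bookkeeping but yields a reusable statement ($\mathcal B$ is $*$-closed whenever $M_0=I$), whereas the paper's argument is more hands-on and leaves the $*$-structure implicit; both verifications of the individual steps check out. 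One caveat: your closing ``Equivalently'' sentence, routing through Theorems \ref{criterio1} and \ref{reducibility1}, is not available at the stated generality --- Theorem \ref{criterio1} is proved only for a finite interval (its proof rests on Lemma \ref{moments}), and Example \ref{mom} shows that for unbounded support the commutant of the $P_n$ can be non-scalar while $\mathcal C_\RR=\RR I$ and $W$ is irreducible, so that alternative finish is valid only for compact support and should be deleted or qualified. Your main argument, by contrast, is support-free and matches the full generality of the theorem.
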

\begin{proof}Necessity follows from Lemma \ref{Favard} and Corollary \ref{cor}. For the sufficiency  we can assume, without loss of generality, that $\|P_0\|=I$.
We have that  $\{P_n\}_{n\in\mathbb{N}_0}$ satisfy a three term recursion relation
\begin{equation}\label{threeterm}
 A_n P_{n-1}(x)+B_n P_n(x)+P_{n+1}(x)=x P_n(x),
\end{equation}
 for $n\in\mathbb{N}_0$. Let $S_n=\|P_n\|^2$. Then it is well known that \eqref{threeterm} implies that
\begin{equation}\label{uu}
S_n=A_nS_{n-1},\qquad
B_nS_n=(B_nS_n)^*.
\end{equation}

By hypothesis, there is a non-scalar matrix $T$ commuting with $P_n$ for every $n\in\NN_0$. From  the proof of Theorem \ref{ab}, it is clear that $T$ also commutes with $A_n$ and $B_n$ for all  $n\in\mathbb N_0$. Since $S_0$ is the identity, it follows from \eqref{uu} that $T$ also commutes with $S_n$ for all $n\in\NN_0$. Therefore, every eigenspace of $T$ is invariant by $\{P_n, A_n, B_n, S_n: n\in\NN_0\}$. Thus, we have that there exists a unitary matrix $M$ such that, for every $n\in\NN_0$,
\begin{align*}
 MP_n(x)M^{-1}=
&
\left(\begin{matrix}P_n^1(x)&{P_n^0(x)}\\ {\bf0}&P_n^2(x)\end{matrix}\right),
&
MS_nM^{-1}
&
=\left(\begin{matrix}S_n^1&{S_n^0}\\ {\bf0}&S_n^2\end{matrix}\right),
\\
MA_nM^{-1}=
&
\left(\begin{matrix}A_n^1&{A_n^0}\\ {\bf0}&A_n^2\end{matrix}\right),
&
MB_nM^{-1}
&
=
\left(\begin{matrix}B_n^1&{B_n^0}\\ {\bf0}&B_n^2\end{matrix}\right),
\end{align*}
where $P_n^j(x),A_n^j,B_n^j,S_n^j$ are square matrices of size $d_j$, for $j=1,2$, and $P_n^0(x),A_n^0,$ $B_n^0,S_n^0$ are $d_1\times d_2$ matrices. Since $S_n$ is positive definite we have that $S_n^1,S_n^2$ are positive definite and $S_n^0=\bf0$. By \eqref{uu}, $ B_nS_n$ is Hermitian, thus $B_n^0=\bf0$; also from \eqref{uu}, it is clear that $A_n^0=\bf0$.

By conjugating \eqref{threeterm} by $M$ and looking at the upper-right  blocks  we see that
\begin{equation*}
 A_n^1 P_{n-1}^0(x)+B_n^1 P_n^0(x)+P_{n+1}^0(x)=x P_n^0(x),
\end{equation*}
for  $n\in\mathbb{N}_0$.
Since $P_0=\bf I$ we have $P_0^0=\bf0$. Then, by induction on $n$ we get $P_n^0=0$ for every $n\in\NN_0$. Thus $MP_nM^{-1}=P_n^1\oplus P_n^2$, completing the proof of the theorem.
\qed
\end{proof}

\begin{cor}
  A sequence   of orthogonal polynomials $\{Q_n\}_{n\in\mathbb{N}_0}$ with respect to a weight $W$ on $(a,b)$ is orthogonal with respect to a reducible weight if and only if
there is a non-scalar operator of order zero in $\mathcal D(W)$.
\end{cor}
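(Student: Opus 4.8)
The plan is to reduce everything to two facts already established: the characterization of reducible polynomial sequences via their commutant (the theorem immediately preceding this corollary), and the orthogonality of the diagonal blocks of a reducible sequence (Lemma \ref{Favard}). The bridge between $\mathcal D(W)$ and the commutant is an observation I would set up first. Write $Q_n = L_n P_n$, where $L_n$ is the nonsingular leading coefficient of $Q_n$, so that $\{P_n\}$ is the monic sequence; since normalizing leading coefficients preserves orthogonality, $\{P_n\}$ is the monic orthogonal sequence for $W$. An order-zero element of $\mathcal D(W)$ is just a constant matrix $T$ acting on the right, and $D=T$ lies in $\mathcal D(W)$ exactly when $Q_nT=\Gamma_n(D)Q_n$ for all $n$; passing to the monic sequence and comparing leading coefficients forces $\Gamma_n(D)=L_nTL_n^{-1}$ and $P_nT=TP_n$. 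Hence the order-zero part of $\mathcal D(W)$ is precisely the commutant $\{T:TP_n=P_nT\text{ for all }n\}$, and a non-scalar order-zero operator is the same object as a non-scalar matrix in this commutant.

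For the forward implication, suppose $\{Q_n\}$ is orthogonal with respect to a reducible weight $V$. First I would observe that the monic sequence $\{P_n\}$ built from $\{Q_n\}$ is simultaneously the monic orthogonal sequence for $W$ and for $V$, since the monic orthogonal sequence is uniquely determined by the given sequence. As $V$ reduces, there is a nonsingular $M_0$ with $M_0VM_0^*=V_1\oplus V_2$; by uniqueness the monic orthogonal polynomials of a direct-sum weight are block diagonal, $P_n^{V_1}\oplus P_n^{V_2}$, and $M_0P_nM_0^{-1}$ is monic and orthogonal for $M_0VM_0^*$, so $M_0P_nM_0^{-1}=P_n^{V_1}\oplus P_n^{V_2}$. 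Thus $\{P_n\}$ is a reducible sequence in the sense of Definition \ref{def}, and by the preceding theorem its commutant contains a non-scalar matrix, which by the first paragraph is a non-scalar order-zero operator in $\mathcal D(W)$.

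For the converse, a non-scalar order-zero operator gives a non-scalar matrix in the commutant of $\{P_n\}$, so the preceding theorem yields a nonsingular $M$ with $MP_nM^{-1}=P_n^1\oplus P_n^2$ for all $n$. By Lemma \ref{Favard} the blocks $\{P_n^1\}$ and $\{P_n^2\}$ are orthogonal sequences for some weights $U_1,U_2$, hence $\{P_n^1\oplus P_n^2\}$ is orthogonal for the reducible weight $U_1\oplus U_2$; conjugating back, $\{P_n\}$, and therefore $\{Q_n=L_nP_n\}$, is orthogonal for $M^{-1}(U_1\oplus U_2)(M^*)^{-1}$, which is equivalent to $U_1\oplus U_2$ and thus reducible. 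The step I expect to require the most care is the dictionary of the first paragraph: verifying that normalizing $\{Q_n\}$ to the monic sequence identifies the order-zero part of $\mathcal D(W)$ with the commutant of $\{P_n\}$, and that this monic sequence is the same whether orthogonality is tested against $W$ or against $V$. Once this is in place, both implications follow formally from the preceding theorem and Lemma \ref{Favard}, with no finiteness assumption on $(a,b)$ needed.
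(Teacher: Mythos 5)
Your proof is correct and follows the route the paper intends: the corollary is stated without proof precisely because it combines the preceding theorem (reducibility of the monic sequence $\{P_n\}$ is equivalent to a non-scalar commutant) with Lemma \ref{Favard} and the identification of order-zero operators in $\mathcal D(W)$ with the commutant of $\{P_n\}$, exactly the dictionary you set up and the same mechanism used in Theorem \ref{1} and Corollary \ref{cor}. Your added care (monic normalization $Q_n=L_nP_n$, uniqueness of the monic sequence for both $W$ and $V$, and the observation that no finiteness of $(a,b)$ is needed, consistent with Example \ref{mom}) fills in the details faithfully rather than deviating from the paper's argument.
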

\begin{lem}\label{I}
Let $\{Q_n\}_{n\in\mathbb{N}_0}$ be a sequence of orthogonal polynomials with respect to a matrix weight $W$ and with respect to a  matrix  weight $V$. If
\[
\int_\RR W(x)dx =\int_\RR V(x)dx,\\
\]
 then $W$ and $ V$ have the same moments. If both weights have  compact  support, then $W=V$.
\end{lem}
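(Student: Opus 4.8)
The plan is to show first that $W$ and $V$ share the same moment sequence, which reduces the problem to a triangular recursion driven by a single initial datum, and then to deduce equality of the weights from Lemma \ref{moments}. The crucial point I would exploit is that $\{Q_n\}_{n\in\NN_0}$ is orthogonal with respect to \emph{both} weights, so the recursion governing the moments will carry identical coefficients in the two cases. Writing $Q_n(x)=\sum_{j=0}^n K_{n,j}x^j$ with nonsingular leading coefficient $K_{n,n}$, I note that these coefficients are attached to the polynomials and hence are common to $W$ and $V$. Since $Q_0$ is a nonsingular constant matrix, orthogonality of $Q_n$ to $Q_0$ for $n\ge 1$ gives $\int_\RR Q_n(x)W(x)\,dx=0$, that is $\sum_{j=0}^n K_{n,j}M_j^W=\mathbf 0$, where $M_k^W=\int_\RR x^kW(x)\,dx$ (and analogously for $V$).

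Because $K_{n,n}$ is invertible, this relation can be solved for the top moment:
\[
M_n^W=-K_{n,n}^{-1}\sum_{j=0}^{n-1}K_{n,j}M_j^W ,
\]
and the identical identity holds for $V$ with the \emph{same} coefficients $K_{n,j}$. The moments are thereby determined by a triangular recursion whose only free data are the polynomial coefficients (common to the two weights) and the order-zero moment. Since the hypothesis $\int_\RR W=\int_\RR V$ says precisely that $M_0^W=M_0^V$, a routine induction on $n$ then yields $M_n^W=M_n^V$ for every $n\in\NN_0$, establishing that $W$ and $V$ have the same moments.

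For the last assertion I would assume both weights have compact support and pick a finite interval $(a,b)$ containing both supports; on $(a,b)$ every entry of $W-V$ is an integrable function all of whose moments vanish, so Lemma \ref{moments}, applied entrywise, forces $W-V=0$ almost everywhere, i.e.\ $W=V$. I do not expect a genuine obstacle here: the only step requiring care is the passage from orthogonality to the moment recursion, namely the twin observations that the coefficients $K_{n,j}$ are weight-independent and that $K_{n,n}$ is nonsingular (both built into the definition of an orthogonal polynomial sequence). Once these are recorded, the remainder is the induction above followed by the scalar-moment Lemma \ref{moments}.
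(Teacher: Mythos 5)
Your proposal is correct, and it takes a genuinely different route from the paper's. The paper passes to the associated monic sequence $\{P_n\}_{n\in\NN_0}$ and exploits the three term recursion relation: from the identity $\|P_{n+1}\|^2=A_{n+1}\|P_n\|^2$ of \eqref{uu}, whose coefficients $A_n$ are attached to the polynomials and hence weight-independent, it propagates the hypothesis $\|P_0\|^2_W=\|P_0\|^2_V$ to $\|P_n\|^2_W=\|P_n\|^2_V$ for all $n$, concluding $\int_\RR P_n(x)\bigl(W(x)-V(x)\bigr)P_m^*(x)\,dx=0$ for all $n,m$ and hence equality of the moments. You avoid the recursion entirely: orthogonality of $Q_n$ against the nonsingular constant $Q_0$ already yields the triangular system $\sum_{j=0}^n K_{n,j}M_j={\bf 0}$ with weight-independent coefficients and invertible $K_{n,n}$, so the whole moment sequence is determined by $M_0$, and the hypothesis $M_0^W=M_0^V$ plus induction finishes. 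Your argument is more elementary --- it needs neither the passage to monic polynomials nor the norm identity \eqref{uu}, and in fact it only uses the relations $(Q_n,Q_0)=0$ for $n\ge1$ rather than full mutual orthogonality --- whereas the paper's route yields the stronger intermediate fact $(P_n,P_m)_W=(P_n,P_m)_V$ for all $n,m$ and reuses machinery (\eqref{uu}) that is needed again immediately afterwards in the proof of Theorem \ref{scalar}. Both proofs conclude identically, applying Lemma \ref{moments} entrywise on a finite interval containing the compact supports to deduce $W=V$.
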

\begin{proof}
Given the sequence $\{Q_n\}_{n\in\mathbb{N}_0}$, let us consider the corresponding sequence of monic  orthogonal  polynomials $\{P_n\}_{n\in\mathbb{N}_0}$. This sequence is also orthogonal with respect to $W$ and $V$, therefore they
satisfy a  three term  recursion relation
\begin{equation*}
 A_n P_{n-1}(x)+B_n P_n(x)+P_{n+1}(x)=x P_n(x)
\end{equation*}
   for $n\in\mathbb{N}_0$.
 From the hypothesis we have that $\|P_0\|_W^2=\|P_0\|_V^2$. Besides,  from \eqref{uu}  we have
$\|P_{n+1}\|^2=A_{n+1}\|P_{n}\|^2$ for every $n\in\mathbb{N}_0$ and
for both norms  $\|\cdot\|_W^2$ and $\|\cdot\|_V^2$. Therefore $\|P_n\|_W^2=\|P_n\|_V^2$ for any $n\in\mathbb{N}_0$. Hence,
$$\int_\RR P_n(x)(W(x)-V(x))P_m^*(x)dx =0,$$
 for every $n,m\ge 0$. Hence, $W$ and $V$ have the same moments. If both have compact support, then  by Lemma~\ref{moments} we have $W=V$.\qed
 \end{proof}

\begin{thm}\label{scalar}
Let $\{Q_n\}_{n\in\mathbb{N}_0}$ be a  sequence of orthogonal polynomials with respect to an irreducible weight $W$ and  with respect to a weight $V$, both with compact support. Then, $V$ is a scalar multiple of $W$.
\end{thm}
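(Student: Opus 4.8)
The plan is to reduce everything to Lemma \ref{I} by showing that the zeroth moments of $V$ and $W$ differ only by a positive scalar. Let $\{P_n\}_{n\in\NN_0}$ be the monic orthogonal polynomials obtained from $\{Q_n\}_{n\in\NN_0}$; since $\{Q_n\}$ is orthogonal with respect to both $W$ and $V$, the same sequence $\{P_n\}$ is orthogonal for both weights, and hence both share the three term recursion coefficients $A_n,B_n$ appearing in \eqref{threeterm}. Write $S_n=\|P_n\|_W^2$ and $\tilde S_n=\|P_n\|_V^2$, both Hermitian and positive definite. Applying \eqref{uu} to each weight gives $S_n=A_nS_{n-1}$ and $\tilde S_n=A_n\tilde S_{n-1}$, so that $S_n^{-1}\tilde S_n$ is independent of $n$; I set $T=S_0^{-1}\tilde S_0=M_0^{-1}M_0'$, where $M_0=\int W$ and $M_0'=\int V$, so that $\tilde S_n=S_nT$ for every $n$.

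The key step is to prove that $T^*$ commutes with every $A_n$ and $B_n$. First, since $S_n$ and $\tilde S_n=S_nT$ are both Hermitian, one obtains $S_nT=T^*S_n$ for all $n$. Next, by \eqref{uu} both $B_nS_n$ and $B_n\tilde S_n$ are Hermitian; rewriting these two conditions as $B_nS_n=S_nB_n^*$ and $B_nS_nT=S_nTB_n^*$ and then substituting the identity $S_nT=T^*S_n$ yields $B_nT^*=T^*B_n$. Finally, using $A_n=S_nS_{n-1}^{-1}$ together with $S_nT=T^*S_n$ and $S_{n-1}T=T^*S_{n-1}$ gives $A_nT^*=T^*A_n$. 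I expect this computation---extracting the commutation of $T^*$ with the $B_n$ from the two Hermiticity conditions---to be the main obstacle, though it is short once the relation $S_nT=T^*S_n$ is established.

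With $T^*$ commuting with all $A_n$ and $B_n$, and since $W$ is irreducible on a bounded interval (compact support forces $a,b\in\RR$), Theorem \ref{ab} prohibits any non-scalar matrix from commuting with $\{A_n,B_n\}_{n\in\NN_0}$, so $T^*$, and hence $T$, is scalar. Because $T=M_0^{-1}M_0'$ is a product of two positive definite Hermitian matrices, its eigenvalues are real and positive, so $T=\lambda I$ with $\lambda>0$. Then $M_0'=\lambda M_0$, that is $\int V=\int\lambda W$, and $\{Q_n\}$ is orthogonal with respect to both $\lambda W$ and $V$; Lemma \ref{I} (compact support) then gives $\lambda W=V$, so $V$ is the scalar multiple $\lambda W$ of $W$, completing the proof.
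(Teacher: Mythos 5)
Your proposal is correct and follows essentially the same route as the paper: your matrix $T^{*}=\|P_0\|_V^2\,\|P_0\|_W^{-2}$ is precisely the matrix the paper shows to commute with all the recursion coefficients $A_n,B_n$ using the Hermiticity relations \eqref{uu}, after which Theorem \ref{ab} (via irreducibility of $W$ on a bounded interval) forces it to be a positive scalar $\lambda$ and Lemma \ref{I} applied to $\lambda W$ and $V$ gives $V=\lambda W$. The only cosmetic differences are that you phrase the commutation through $T=S_0^{-1}\tilde S_0$ and its adjoint rather than directly through $\tilde S_0 S_0^{-1}$, and that you use $A_n=S_nS_{n-1}^{-1}$ where the paper cancels products $A_n\cdots A_1$.
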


\begin{proof}Let us consider the corresponding sequence of monic orthogonal polynomials $\{P_n\}_{n\in\mathbb{N}_0}$.
From \eqref{uu} we have that $\|P_{n+1}\|^2=A_{n+1}\|P_{n}\|^2$, for  $n\in\NN_0$ and the inner products given by $W$ or $V$. As a straightforward consequence we obtain
\begin{equation*}
 A_n\cdots A_1\|P_0\|^2_W=\|P_n\|^2_W,\quad
 A_n\cdots A_1\|P_0\|^2_V=\|P_n\|^2_V,
\end{equation*}
for $\in\NN_0$. Hence, since the norms are nonsingular hermitian matrices, we have
$$
\|P_n\|^2_W\,\|P_0\|^{-2}_W=\|P_n\|^2_V\,\|P_0\|^{-2}_V\quad\text{and }\quad
\|P_0\|^{-2}_W\,\|P_n\|^2_W=\|P_0\|^{-2}_V\,\|P_n\|^2_V.
$$
Then,
\begin{multline*}
 A_n\dots A_1\|P_0\|^2_V=
\|P_0\|^2_V\,\|P_0\|^{-2}_V\,\|P_n\|^2_V=
\|P_0\|^2_V\,\|P_0\|^{-2}_W\,\|P_n\|^2_W\\
=
\|P_0\|^2_V\,\|P_0\|^{-2}_W\,A_n\dots A_1\|P_0\|^2_W.
\end{multline*}
Hence, $\|P_0\|^2_V\,\|P_0\|^{-2}_W$ commutes with $A_n\cdots A_1$ for every $n\in\NN_0$. Since $A_n$ is nonsingular for all $n\ge1$ it follows that $\|P_0\|^2_V\,\|P_0\|^{-2}_W$ commutes with $A_n$ for all $n\in\NN_0$.

For $n\in\NN_0$ and the inner products given by $W$ or $V$ we have $B_n\|P_n\|^2=(B_n\|P_n\|^2)^*$, see \eqref{uu}.
Thus
\begin{align*}
 B_n\|P_n\|^2_W&=(B_n\|P_n\|^2_W)^*=
 (B_n \|P_n\|^2_V\,\|P_0\|^{-2}_V\,\|P_0\|^2_W)^*\\
 &=\|P_0\|^2_W\,\|P_0\|^{-2}_V(B_n \|P_n\|^2_V)^*
= \|P_0\|^2_W\,\|P_0\|^{-2}_VB_n \|P_n\|^2_V\\
  &=\|P_0\|^2_W\,\|P_0\|^{-2}_VB_n\,\|P_0\|^2_V\,\|P_0\|^{-2}_W\, \|P_n\|^2_W.
\end{align*}
Therefore, $\|P_0\|^2_V\|P_0\|^{-2}_W$ also commutes with $B_n$ for every $n\in\NN_0$.
By assumption $W$ is irreducible, thus Theorem \ref{ab} implies that $\|P_0\|^2_V=\lambda\|P_0\|^{2}_W$, with $\lambda{>0}$. Finally, Lemma \ref{I} completes the proof of the theorem.\qed
\end{proof}

\smallskip
When the support is bounded, we have proved that the reducibility of a weight is equivalent to the reducibility of any of its sequence of orthogonal polynomials. Even more, we showed that there is only one, up to scalar, irreducible weight for a given irreducible sequence of orthogonal polynomials. When the support is not bounded, this is not true; this is one of the main motivations behind Definition \ref{def}.
We give the following ad hoc example to help the reader to understand this.

\begin{ex}\label{mom}
Let us consider the weight $w$ defined by
$$w(x)=\frac2{x}e^{-(\log x)^2/2}, \quad\text{for}\quad x>0,$$
and let $\{p_n\}_{n\in\mathbb N_0}$ be a sequence of orthogonal polynomials with respect to $w$.
Let $$f(x)=\frac{1}{x}e^{-(\log x)^2/2}\sin(2\pi\log x),\quad\text{for}\quad x>0.$$
The function $x^kf(x)$ is continuous in $(0,\infty)$ and
$\vert x^kf(x)\vert\le\frac{x^k}{x}e^{-(\log x)^2/2}$. By making the change of variables $\log x=s+k$ we get,
$$\int_0^\infty x^k e^{-(\log x)^2/2}\frac{dx}{x}=e^{k^2/2}\int_{-\infty}^\infty e^{-s^2/2}\,ds=\frac1{\sqrt{2\pi}}e^{k^2/2}.$$
Hence $x^kf(x)$ is an integrable function in $(0,\infty)$, for all $k\ge0$. Moreover,	
\begin{equation*}
\begin{split}
\int_0^\infty x^kf(x)\,dx&=\int_0^\infty x^k e^{-(\log x)^2/2}\sin(2\pi\log x)\frac{dx}{x}\\
&=e^{k^2/2}\int_{-\infty}^\infty e^{-s^2/2}\sin(2\pi s)\,ds=0.
\end{split}
\end{equation*}
Now, we define the matrix weight $W$ by
$$W(x)=\left(\begin{matrix}
        2 w(x)&f(x)\\f(x)&w(x)
             \end{matrix}
\right), \quad\text{for}\quad x>0.$$
The reader can easily check that $W(x)$ is positive definite for all $x>0$. Also, by Theorem \ref{reducibility1}, after some straight forward computations, it follows that $W$ is irreducible. Then, the sequence of polynomials $\{P_n\}_{n\in\mathbb N_0}$, given by
$$P_n(x)=\left(\begin{matrix}
        p_n(x)&0\\0&p_n(x)
             \end{matrix}
\right),$$
is orthogonal with respect to the irreducible weight $W$. Clearly, $\{P_n\}_{n\in\mathbb N_0}$ is also orthogonal with respect to the matrix weight
$$V(x)=\left(\begin{matrix}
        w(x)&0\\0&w(x)
             \end{matrix}
\right), \quad\text{for}\quad x>0.$$
\end{ex}

\end{section}

%
%
\end{document}